\newtheorem{theorem}{Theorem}[section]
\newtheorem{lemma}[theorem]{Lemma}
\newtheorem{proposition}[theorem]{Proposition}
\newtheorem{corollary}[theorem]{Corollary}
\newtheorem{conjecture}[theorem]{Conjecture}
\theoremstyle{question}
\newtheorem{question}[theorem]{Question}
\theoremstyle{definition}
\newtheorem{definition}[theorem]{Definition}
\theoremstyle{remark}
\newtheorem{remark}[theorem]{Remark}
\newtheoremstyle{cited}{.5\baselineskip\@plus.2\baselineskip\@minus.2\baselineskip}{.5\baselineskip\@plus.2\baselineskip\@minus.2\baselineskip}{\itshape}{}{\bfseries}{\bfseries .}{5pt plus 1pt minus 1pt}{\thmname{#1}\thmnumber{~#2}\thmnote{ \normalfont#3}}
\theoremstyle{cited}
\newtheoremstyle{citeddef}{.5\baselineskip\@plus.2\baselineskip\@minus.2\baselineskip}{.5\baselineskip\@plus.2\baselineskip\@minus.2\baselineskip}{}{}{\bfseries}{\bfseries .}{5pt plus 1pt minus 1pt}{\thmname{#1}\thmnumber{~#2}\thmnote{ \normalfont#3}}
\theoremstyle{citeddef}
\newcommand{\Z}{\mathbb Z}
\newcommand{\Q}{\mathbb Q}
\newcommand{\C}{\mathbb C}
\newcommand{\N}{\mathbb N}
\newcommand{\R}{\mathbb R}
\newcommand{\reso}{_{\text{res}}}
\newcommand{\ba}{{}_{\bm{a}}}
\newcommand{\baa}{{}_{\bm{a}'}}
\newcommand{\bfa}{\bm{a}}
\newcommand{\diae}{{}^\diamond\!E} 
\newcommand{\diaf}{{}^\diamond\!F}
\newcommand{\Sh}{\operatorname{Sh}}
\newcommand{\im}{\operatorname{im}}
\newcommand{\Hom}{\operatorname{Hom}}
\newcommand{\GL}{\operatorname{GL}}
\newcommand{\SL}{\operatorname{SL}}
\newcommand{\Sing}{\operatorname{Sing}}
\newcommand{\rank}{\operatorname{rank}}
\newcommand{\Gr}{\operatorname{Gr}}
\newcommand{\pc}{\operatorname{para-c}}
\newcommand{\pd}{\operatorname{para-deg}}
\newcommand{\MB}{\operatorname{M_B}}
\newcommand{\pe}{\operatorname{\pi^{\text{\'et}}_1}}
\newcommand{\dashedlongrightarrow}{\xymatrix@1@=15pt{\ar@{-->}[r]&}}
\renewcommand{\longrightarrow}{\xymatrix@1@=15pt{\ar[r]&}}
\renewcommand{\mapsto}{\xymatrix@1@=15pt{\ar@{|->}[r]&}}
\renewcommand{\twoheadrightarrow}{\xymatrix@1@=15pt{\ar@{->>}[r]&}}
\newcommand{\hooklongrightarrow}{\xymatrix@1@=15pt{\ar@{^(->}[r]&}}
\newcommand{\congpf}{\xymatrix@1@=15pt{\ar[r]^-\sim&}}
\renewcommand{\cong}{\simeq}
\newcommand{\scrC}{\mathscr{C}}
\begin{document}  
\title[$\pi_1$-small divisors and fundamental groups of varieties]{$\pi_1$-small divisors and fundamental groups of varieties}

\author{Feng Hao}

\address{Department of Mathematics, KU Leuven, Celestijnenlaan 200b - box 2400, 3001 Leuven, Belgium.}
\email{feng.hao@kuleuven.be}
\date{\today}
\subjclass[2010]{primary 14F35, 14C20; secondary 14D07, 14J29} 
%
%

\keywords{$\pi_1$-small divisor, fundamental group, Shafarevich map, Spectral covering, Variation of Hodge Structures, Hyperbolicity.}

\begin{abstract} Lasell and Ramachandran show that the existence of rational curves of positive self-intersection on a smooth projective surface $X$ implies that all the finite dimensional linear representations of the fundamental group $\pi_1(X)$ are finite. In this article, we generalize Lasell and Ramachandran's result to the case of $\pi_1$-small divisors on quasiprojective varieties. We also study $\pi_1$-small curves and hyperbolicity properties of smooth projective surfaces of general type with infinite fundamental groups.
\end{abstract}

\maketitle

\section{Introduction}\label{intro}

Lefschetz hyperplane theorem indicates that (possibly singular) curves of positive self-intersection on a smooth projective surface contains much information of the fundamental group of the projective surface. More generally, Napier and Ramachandran \cite{NR98} showed that the fundamental group of a local complete intersection subvariety with ample normal bundle dominates a subgroup of finite index in the fundamental group of the ambient variety. In principle, one should expect that the smooth model of the singular hyperplane also captures some information of the fundamental group of the ambient space. In particular, Nori asked the following natural question while studying the weak Lefschetz theorem in \cite{Nori83}.
\begin{question}\label{Quest:Nori} Let $X$ be a smooth projective surface. Suppose there is a (possibly singular) rational curve $C$ on $X$ with the self-intersection $C^2>0$. Is the fundamental group $\pi_1(X)$ finite?
\end{question} 

When $C$ is smooth, the positive answer of this question follows directly from the Lefschetz hyperplane theorem. Nori studied this question for nodal curves with large enough self-intersections. Gurjar related this question to the Shafarevich uniformization conjecture. Moreover, \cite{GP09}, 
\cite{GPP11}, etc., answered this question in some special cases of surfaces for which the Shafarevich conjecture holds. The following theorem due to Lasell and Ramachandran \cite{LR96} answers the above question when $\pi_1(X)$ is linear.

\begin{theorem}[Lasell-Ramachandran]\label{Thm:L-R}
Let $X$ be a smooth projective surface. Let the effective divisor $B=\sum_iC_i$ be a finite union of (possibly singular) rational curves $C_i$ on $X$. Assume there is a divisor $D$ supported on $B$ with $D^2>0$, then for any finite dimensional linear representation $\rho\colon \pi_1(X)\to \GL(m, \C)$, $\rho(\pi_1(X))$ is finite.
\end{theorem}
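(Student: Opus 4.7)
The plan is to apply nonabelian Hodge theory in order to replace $\rho$ by the monodromy of a complex variation of Hodge structure, and then to exploit the fact that period maps contract rational curves, against the positive self-intersection hypothesis. I will first reduce to the case where $\rho$ is semisimple. The key preliminary observation is that the Albanese morphism $a\colon X\to\Alb(X)$ contracts every rational curve $C_i$, since $\Alb(\PP^1)=0$. If $\Alb(X)$ were non-trivial, $\Supp(D)$ would lie in the exceptional locus of the birational morphism $a\colon X\to a(X)$ (when $\dim a(X)=2$) or in fibers of $a$ (when $\dim a(X)=1$); in either case Mumford's negative-definiteness of exceptional divisors, respectively Zariski's lemma, would give $D^2\leq 0$, contradicting $D^2>0$. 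Hence $q(X)=0$, and the same reasoning on any finite \'etale cover $X'\to X$ yields $q(X')=0$, after checking that the hypothesis transfers (the irreducible components of the \'etale pullback of a singular rational curve remain rational, by base-change of normalizations under \'etale morphisms). The Deligne--Griffiths--Morgan--Sullivan formality theorem then forces the Malcev completion of $\pi_1(X')$ to be trivial, so every unipotent representation of $\pi_1(X')$ is trivial. Writing $\rho$ as a unipotent extension of its semisimplification $\rho^{\mathrm{ss}}$ and passing to a finite \'etale cover of $X$ trivializing $\rho^{\mathrm{ss}}$, I may henceforth assume $\rho$ is semisimple.

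Assuming $\rho$ semisimple, I would invoke Simpson's nonabelian Hodge correspondence together with Corlette's harmonic metric theorem: $\rho$ corresponds to a polystable Higgs bundle $(E,\theta)$ with vanishing rational Chern classes, and the Simpson $\C^*$-action $\theta\mapsto\lambda\theta$ admits a limit as $\lambda\to 0$ that is a fixed point, i.e.\ a polarized $\C$-variation of Hodge structure $\bbV$ on $X$. The limit preserves the identity component of the Zariski closure of the image, so $\rho$ has finite image if and only if $\bbV$ has finite monodromy. Thus I may replace $\rho$ by the monodromy of a polarized $\C$-VHS on $X$.

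Let $p\colon X\to\Gamma\backslash\sD$ denote the period map of $\bbV$ and let $X\to Y$ be its Stein factorization onto a normal projective variety $Y$ of dimension $d\leq 2$. Each rational curve $C_i$ is contracted by $p$: the lift along the normalization $\PP^1\to C_i$ pulls $\bbV$ back to a polarized $\C$-VHS on $\PP^1$, whose period map is constant by Griffiths' negativity of the horizontal tangent bundle on the period domain. Hence $\Supp(D)$ lies in the contracted locus of $X\to Y$. If $d=2$, Mumford's theorem forces $D^2<0$; if $d=1$, Zariski's lemma forces $D^2\leq 0$. Both possibilities contradict $D^2>0$, so $d=0$, meaning $p$ is constant, $\bbV$ has finite monodromy, and $\rho$ therefore has finite image, as required.

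The main obstacle is the preservation of the finite-image property under Simpson's $\C^*$-limit: establishing that $\rho$ and its VHS limit have the same identity component of the Zariski closure of the image is where the full strength of nonabelian Hodge theory is needed. A secondary technical point is the careful transfer of the positive self-intersection hypothesis to finite \'etale covers used in the reduction to the semisimple case.
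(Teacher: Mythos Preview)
Your reduction to the semisimple case is essentially the same as the paper's: the Albanese argument giving $q(X')=0$ on every finite \'etale cover is exactly Corollary~3.2 here (specialized to $D=\emptyset$), and the passage from ``semisimple finite'' to ``all finite'' via unipotent quotients is how the paper closes in Step~4.

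The genuine gap is the $\C^*$-limit step. You assert that the limit VHS $\bbV$ and the original $\rho$ have the same identity component of the Zariski closure of the image, so that finiteness of one implies finiteness of the other. This is not a standard consequence of nonabelian Hodge theory, and in general it fails: along the $\C^*$-flow $(E,\theta)\mapsto(E,t\theta)$ the corresponding representations vary nontrivially in $\MB(X,m)$, and at the limit the monodromy group can strictly shrink. What Simpson's theory gives you is only that $[\rho]$ and $[\bbV]$ lie in the same connected component of $\MB(X,m)$. From this you cannot conclude $[\rho]=[\bbV]$ unless you already know that $\dim\MB(X,m)=0$. That rigidity statement is exactly what the paper supplies as a separate step (Proposition~3.3), and it is proved by an entirely different mechanism: one passes to an unbounded $p$-adic (or $\infty$-adic) representation, takes the Jost--Zuo pluriharmonic map to a Bruhat--Tits building, extracts the associated spectral cover and multivalued logarithmic $1$-forms, and then contradicts $D^2>0$ via the same Albanese/contraction argument applied on the spectral cover. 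Your outline contains no substitute for this rigidity step; the period-map argument you give only controls the fixed points of the $\C^*$-action, not the whole moduli space.

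A secondary issue: even once $\rho$ is known to underlie a $\C$-PVHS, writing ``Stein factorization onto a normal projective variety $Y$'' is not available as stated, since $\Gamma\backslash\sD$ is in general neither algebraic nor even Hausdorff. The paper sidesteps this entirely: instead of a period map, it works directly with the Higgs bundle $(\diae=\bigoplus_i\diae^{i,w-i},\theta)$, uses that the restriction to each $C_i$ is trivial to get $c_1(\diaf^{i_0,w-i_0})\cdot D=0$ for the lowest nonzero Hodge piece, combines Zuo's semi-negativity $c_1(\diaf^{i_0,w-i_0})^2\geq 0$ with stability $c_1(\diaf^{i_0,w-i_0})\cdot H<0$, and reaches a contradiction with $D^2>0$ via the Hodge index theorem. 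That argument is purely algebraic on $X$ and avoids the target space altogether. Your period-map idea is morally the same contraction picture, but to make it rigorous you would need either the Shafarevich map $\Sh^\rho_X$ (which is algebraic) in place of the period map, or the recent algebraicity theorems for images of period maps; neither is invoked.
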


\begin{remark}
(a) To completely answer Question \ref{Quest:Nori}, it is natural to ask whether there exists a smooth projective surface $X$ such that the fundamental group $\pi_1(X)$ is infinite, but any finite dimensional linear representation of $\pi_1(X)$ has finite image. This is still an open question. In fact, there is a finitely presented residually finite infinite group $G$ such that representations $\rho\colon G\to \GL(m, \C)$ have finite images for all $m$. However, it is not clear whether or not $G$ is the fundamental group of a smooth projective variety (see e.g., \cite[Remark 0.2 (5)]{BKT13}).

(b) Theorem \ref{Thm:L-R} is extended by Zuo \cite{Zuo96a} to the case that $D^2=0$.
\end{remark}

In this article, we generalize the above theorem to $\pi_1$-small divisors (see Definition \ref{def:pi1small}) on higher dimensional quasi-projective varieties. The following main theorem shows that a $\pi_1$-small divisor with some positivity properties captures lots of information of the fundamental group of the ambient variety. We fix the following notations throughout this article. 

Let $X$ be a complex smooth projective variety of dimension $n\geq2$, $D=\sum_{k=1}^sD_k$ be a simple normal crossing divisor on $X$, and  $Y=\sum_{i=1}^r n_iY_i$ be a divisor on $X$. Denote the quasiprojective variety $X-D$ by $U$. Also, denote the restriction of $Y_i$ on $U$ by $Y^o_i$ for each $i$, and $Y^o\coloneqq\sum_{i=1}^r n_iY_i^o$. We say that $Y$ \textsl{intersects mildly with respect to} $D$, if $\dim Y_i\cap D_p\cap D_q\leq n-3$ for any $i$ and $p\neq q$. With these notations, we have the following theorem.

\begin{theorem}\label{Thm:main}
Let $X$ be a smooth projective variety of dimension $n\geq2$, together with a normal crossing divisor $D$ and a divisor $Y=\sum_{i=1}^r n_iY_i$. Suppose that 
\begin{enumerate}
\item \label{item:main1} $Y^o$ is a $\pi_1$-small divisor on $U$, i.e., the homomorphism $ \pi_1(\widehat{Y}^o_i)\to \pi_1(U)$ has finite image for each $i$ and some (hence for any) desingularization $\widehat{Y}^o_i$ of $Y^o_i$,

\item \label{item:main2} $Y$ intersects mildly with respect to $D$, and

\item \label{item:main3} there exists an ample divisor $H$ on $X$, such that $Y^2\cdot H^{n-2}>0$.
\end{enumerate}
Then any finite dimensional linear representation $\rho\colon\pi_1(U)\to \GL(m, \C)$ has finite image. 
\end{theorem}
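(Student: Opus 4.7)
The plan is to argue by contradiction: assume $\rho$ has infinite image. After replacing $\rho$ by its semisimplification (which preserves whether the image is finite), I may assume $\rho$ is reductive. By Mochizuki's extension of the Corlette--Simpson correspondence to the quasi-projective tame setting, $\rho$ corresponds to a tame parabolic Higgs bundle $(E_*, \theta)$ on $(X, D)$ with logarithmic Higgs field $\theta \colon E \to E \otimes \Omega^1_X(\log D)$. Invoking Simpson's $\C^\ast$-action on the moduli of Higgs bundles, whose limits lie in the fixed locus consisting of systems of Hodge bundles, it suffices to prove the theorem when $\rho$ underlies a polarized complex variation of Hodge structures on $U$. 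The eigenvalues of $\theta$ then define the characteristic polynomial with coefficients $\lambda_j \in H^0(X, \Sym^j \Omega^1_X(\log D))$ for $j = 1, \ldots, m$, and the infiniteness of $\rho(\pi_1(U))$ is reflected in the non-vanishing of some $\lambda_j$.

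The first key step combines hypothesis \eqref{item:main1} with the functoriality of non-abelian Hodge theory: since $\pi_1(\widehat{Y}_i^o) \to \pi_1(U)$ has finite image, the pullback local system to $\widehat{Y}_i^o$ trivializes after a finite étale cover, which by functoriality forces the pulled-back Higgs bundle to be a direct sum of copies of $(\sO, 0)$. In particular, the eigenvalues $\lambda_j$ restrict to zero on each $\widehat{Y}_i^o$. Hypothesis \eqref{item:main2} then permits an upgrade to a global statement: since $Y_i$ meets the singular locus of $D$ only in codimension $\geq 3$, the natural restriction of $\Sym^j \Omega^1_X(\log D)$ to $Y_i$ compares well with logarithmic differentials on $\widehat{Y}_i^o$ on a big open subset of $Y_i$, and I expect to conclude that each $\lambda_j$ factors as a section of $\Sym^j \Omega^1_X(\log D) \otimes \sO_X(-Y)$, with multiplicity $n_i$ along $Y_i$.

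With these vanishings in hand, hypothesis \eqref{item:main3} produces the desired contradiction. Intersecting with a very general complete intersection surface $S = H_1 \cap \cdots \cap H_{n-2}$ reduces the problem to a projective surface on which $Y_S \coloneqq Y|_S$ satisfies $Y_S^2 > 0$ and $\lambda_j|_S$ lies in $H^0(S, \Sym^j \Omega^1_S(\log D_S) \otimes \sO_S(-Y_S))$. The positivity (bigness) of $Y_S$ combined with Zuo's negativity of the kernel of the Higgs bundles arising from a polarized VHS (which directly generalizes the surface argument of Lasell--Ramachandran) forces $\lambda_j|_S = 0$, and hence $\lambda_j = 0$ by the generality of $S$. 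Thus $\theta$ is nilpotent; combined with the Hodge grading on $E$, a standard induction on the weight filtration upgrades this to $\theta = 0$, so $\rho$ must be unitary.

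Finally, in the unitary case, the associated Hermitian flat bundle trivializes on a finite étale cover of each $\widehat{Y}_i^o$, and I would argue via a Shafarevich-type or Albanese contraction: the classifying map to the appropriate moduli of unitary representations contracts each $Y_i$ to a point, but the positivity $Y^2 \cdot H^{n-2} > 0$ forbids such a contraction, finishing the proof. The main obstacle in this plan lies in the middle step: rigorously bootstrapping the pointwise vanishing of the $\lambda_j$ on $\widehat{Y}_i^o$ to the global factorization $\lambda_j \in H^0(X, \Sym^j \Omega^1_X(\log D) \otimes \sO_X(-Y))$, and then deducing $\lambda_j = 0$ from the positivity hypothesis. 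This is the technical heart of the argument, where the mild intersection hypothesis \eqref{item:main2} and a careful analysis of the parabolic structure of $(E_*, \theta)$ near $D$ are essential.
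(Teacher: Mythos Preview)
Your plan has a central gap in the step where you pass from nilpotency of $\theta$ to $\theta=0$. Once you have reduced to the case where $\rho$ underlies a $\C$-PVHS, the Higgs field $\theta\colon E^{p,q}\to E^{p-1,q+1}\otimes\Omega^1_X(\log D)$ shifts the Hodge grading and is therefore \emph{automatically} nilpotent; the coefficients $\lambda_j$ of the characteristic polynomial vanish identically without any hypothesis on $Y$. Consequently your entire middle argument (restricting $\lambda_j$ to $\widehat{Y}^o_i$, bootstrapping to a factorization through $\sO_X(-Y)$, and invoking positivity) is vacuous, and the claimed ``standard induction on the weight filtration'' from nilpotent to $\theta=0$ is simply false---a nontrivial VHS always has nilpotent but nonzero Higgs field. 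The paper proves $\theta=0$ by a completely different mechanism: take the lowest Hodge piece $N={}^\diamond F^{i_0,w-i_0}\subset\ker\theta$; stability gives $c_1(N)\cdot H<0$, Zuo's semi-negativity gives $c_1(N)^2\ge 0$, and the $\pi_1$-smallness forces $c_1(N)\cdot Y'=0$ on a suitable cover. These three facts contradict the Hodge index theorem when $Y'^2>0$.

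Two further issues. First, the reduction ``it suffices to treat the VHS case'' via the $\C^\ast$-action is not free: the $\C^\ast$-limit of $\rho$ being finite does not by itself force $\rho$ to be finite. The paper first proves $\dim \MB(U,m)=0$ (Proposition~\ref{prop:finitebetti}) using Jost--Zuo pluriharmonic maps to Bruhat--Tits buildings and the associated spectral covers; only then does every semisimple $\rho$ coincide with its $\C^\ast$-limit and hence underlie a $\C$-PVHS. Second, ``$\theta=0$ hence unitary'' does not give finiteness: infinite unitary representations abound. The paper extracts from the same $p$-adic argument that $\rho$ takes values in $\GL(m,\mathcal{O}_K)$ after a finite \'etale cover, and it is unitarity together with this integrality that forces the image to be finite. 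Your proposed Shafarevich/Albanese contraction in the unitary case does not supply this missing arithmetic input.
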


\begin{remark} (a) In fact, by the proof of Theorem \ref{Thm:main}, the theorem still holds if one replaces the assumption (\ref{item:main1}) by a ``weaker'' condition: the homomorphism \[\pi_1(\widehat{Y}^o_i)\to \pi_1(U)\to\pe(U)\] is trivial for each $i$ and some desingularization $\widehat{Y}^o_i$ of $Y^o_i$ (See Corollary \ref{cor:strong-main}).

(b) If the divisor $Y$ does not intersect with the boundary divisor $D$, or $D=\emptyset$ (see Corollary \ref{Cor:main-proj}), then assumption (\ref{item:main2}) of Theorem \ref{Thm:main} is satisfied automatically.

(c) The condition $Y^2\cdot H^{n-2}>0$, rather than  $Y^n>0$, is the right condition in assumption (\ref{item:main3}) of Theorem \ref{Thm:main}. In fact, if $X$ is the blowup of an arbitrary smooth projective threefold at a closed point, then the exceptional divisor $E$ is $\pi_1$-small and $E^3>0$. However, $E$ has no control on the fundamental group of $X$.

(d) For a normal variety $X$ with infinite fundamental group $\pi_1(X)$ and a $\pi_1$-small divisor $Y$ on $X$ admitting a positive dimensional linear system $|Y|$, by \cite[Theorem 1.12]{Kol95} or \cite[Proposition 1.4]{Camp91}, general irreducible members of $|Y|$ are not $\pi_1$-small if the linear system $|Y|$ has base points. Therefore, although the assumption (\ref{item:main3}) of Theorem \ref{Thm:main} provides some positivity properties to the $\pi_1$-small divisor $Y^o$, it is unclear a priori that $Y^o$ affects the fundamental group $\pi_1(U)$ much. 
\end{remark}

In the projective case, we have the following corollary of Theorem \ref{Thm:main}.
\begin{corollary}\label{Cor:main-proj}
Let $(X, \Delta)$ be a projective klt variety of dimension $n\geq2$ with a boundary divisor $\Delta$. Let $H$ be an ample divisor, and $Y$ be a $\pi_1$-small divisor such that $Y^2\cdot H^{n-2}>0$. Then any finite dimensional linear representation $\rho\colon\pi_1(X)\to \GL(m, \C)$ is finite. 
\end{corollary}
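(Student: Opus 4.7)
The strategy is to reduce to Theorem~\ref{Thm:main} by passing to a log resolution. Since $(X,\Delta)$ is klt, $X$ has rational singularities, and by Koll\'ar's theorem on fundamental groups of varieties with rational singularities, for any log resolution $\pi\colon\widetilde{X}\to X$ the induced map $\pi_1(\widetilde{X})\to\pi_1(X)$ is an isomorphism. It thus suffices to show that every finite-dimensional linear representation of $\pi_1(\widetilde{X})$ has finite image. After a small $\Q$-factorialization of $X$ (crepant and preserving $\pi_1$), I may assume $Y$ is $\Q$-Cartier; fix $m\geq 1$ with $mY$ Cartier, take a log resolution $\pi\colon\widetilde X\to X$ of $(X,\Delta+Y)$, and set $\widetilde Y\coloneqq\pi^*(mY)$. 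This is an effective Cartier divisor on $\widetilde X$, decomposing as $\widetilde Y=m\widetilde Y_{\mathrm{strict}}+F$ with $\widetilde Y_{\mathrm{strict}}$ the strict transform of $Y$ and $F$ an effective $\pi$-exceptional divisor.

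I then apply Theorem~\ref{Thm:main} to the triple $(\widetilde X,\,D=\emptyset,\,\widetilde Y)$. Hypothesis~(\ref{item:main2}) is vacuous. For hypothesis~(\ref{item:main3}), pick an ample $A$ on $\widetilde X$ and set $\widetilde H_\epsilon\coloneqq\pi^*H+\epsilon A$, which is ample for every $\epsilon>0$. By the projection formula,
\[
\widetilde Y^{\,2}\cdot(\pi^*H)^{n-2}=(\pi^*(mY))^2\cdot(\pi^*H)^{n-2}=m^{2}\,Y^2\cdot H^{n-2}>0,
\]
so continuity of intersection numbers yields $\widetilde Y^{\,2}\cdot\widetilde H_\epsilon^{\,n-2}>0$ for sufficiently small $\epsilon>0$. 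For hypothesis~(\ref{item:main1}), each strict-transform component $\widetilde Y_i$ is straightforward: a desingularization $\widehat{\widetilde Y_i}$ is simultaneously a desingularization of $Y_i$, and through the iso $\pi_1(\widetilde X)\cong\pi_1(X)$ the image of $\pi_1(\widehat{\widetilde Y_i})$ in $\pi_1(\widetilde X)$ coincides with that of $\pi_1(\widehat{Y_i})$ in $\pi_1(X)$, which is finite by the hypothesis that $Y$ is $\pi_1$-small.

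The main obstacle is verifying $\pi_1$-smallness for the $\pi$-exceptional components $E_j$ of $F$ inside $\widetilde X$. Here the klt geometry enters through the Hacon--McKernan theorem that the fibers of a log resolution of a klt pair are rationally chain connected; combined with the iso $\pi_1(\widetilde X)\cong\pi_1(X)$ and the observation that $\widehat E_j\to\widetilde X\to X$ factors through the proper subvariety $\pi(E_j)\subset X$, this should give the desired control on the image of $\pi_1(\widehat E_j)$ in $\pi_1(\widetilde X)$. Where a direct finiteness argument in $\pi_1(\widetilde X)$ is delicate (for instance when $\pi(E_j)$ is positive-dimensional), I would instead appeal to the variant in Remark~(a) (Corollary~\ref{cor:strong-main}), which only demands that the composition $\pi_1(\widehat E_j)\to\pi_1(\widetilde X)\to\pe(\widetilde X)$ be trivial; this is more tractable because the \'etale fundamental group behaves well under birational modifications of klt varieties and because RC fibers kill the relative \'etale $\pi_1$. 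Once the $\pi_1$-smallness of all components of $\widetilde Y$ is secured, Theorem~\ref{Thm:main} applies and yields finite image for every finite-dimensional linear representation of $\pi_1(\widetilde X)\cong\pi_1(X)$, as required.
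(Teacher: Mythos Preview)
Your overall strategy matches the paper's: pass to a resolution $\mu\colon X\reso\to X$, invoke the isomorphism $\pi_1(X\reso)\cong\pi_1(X)$ (the paper cites Takayama \cite{Tak03} rather than Koll\'ar), verify the hypotheses of Theorem~\ref{Thm:main} for $(\widetilde X,\emptyset,\mu^*Y)$ with a perturbed ample class, and conclude. Your treatment of hypothesis~(\ref{item:main3}) via $\pi^*H+\epsilon A$ is equivalent to the paper's Kodaira-lemma decomposition $\mu^*H\sim_{\mathrm{num}}H_k+\tfrac{1}{k}B$, and your $\Q$-factorialization step is a reasonable precaution the paper omits.

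Where you diverge is in the treatment of hypothesis~(\ref{item:main1}) for the exceptional components, which you flag as the ``main obstacle'' and propose to handle via Hacon--McKernan rational chain connectedness or the \'etale variant of Corollary~\ref{cor:strong-main}. This is unnecessary, and the paper's one-line ``Notice that $\mu^*Y$ is a $\pi_1$-small divisor on $X\reso$'' is justified by a much simpler observation you have overlooked: every irreducible component $E$ of $\mu^*Y$, whether a strict transform or exceptional, satisfies $\mu(E)\subset Y_i$ for some $i$. Now any subvariety of a $\pi_1$-small subvariety is itself $\pi_1$-small: given $Z\subset Y_i\subset X$, choose resolutions $\widehat Z\to Z$ and $\widehat{Y_i}\to Y_i$, blow up $\widehat Z$ further to $\widehat Z'$ so that the rational map $\widehat Z\dashrightarrow\widehat{Y_i}$ becomes a morphism $\widehat Z'\to\widehat{Y_i}$; since $\pi_1(\widehat Z')\cong\pi_1(\widehat Z)$ and the composite $\widehat Z'\to\widehat{Y_i}\to X$ factors $\pi_1(\widehat Z)\to\pi_1(X)$ through $\pi_1(\widehat{Y_i})$, the image is finite. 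Applying this with $Z=\mu(E)$ and using $\pi_1(X\reso)\cong\pi_1(X)$ immediately gives that $E$ is $\pi_1$-small in $X\reso$. No input from klt geometry beyond the $\pi_1$-isomorphism is needed here.
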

 
If a variety admits infinitely many $\pi_1$-small divisors with some negativity properties, then its fundamental groups can not be large.  
\begin{corollary}\label{Cor:infiniteneg}
Let $X$ be a smooth projective variety with a simple normal crossing divisor $D$ and Picard number $\rho$. Denote the complement $X-D$ by $U$. Suppose that there are more than $\rho^2+\rho+1$ prime divisors $\{Y_i\}_{i\in\Lambda}$ on $X$ such that for all $i\in\Lambda$
\begin{enumerate}
\item $Y^o_i\coloneqq Y_i\cap U$ is $\pi_1$-small on $U$,
\item $Y_i$ intersects mildly with respect to $D$, and

\item $Y_i^2\cdot H^{n-2}<0$ for an ample divisor $H$.
\end{enumerate}
Then any finite dimensional linear representation $\rho\colon\pi_1(U)\to \GL(m, \C)$ is finite.
\end{corollary}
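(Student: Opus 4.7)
The plan is to reduce to Theorem \ref{Thm:main} by producing an effective integer combination $Y = \sum_{i \in I} n_i Y_i$ of finitely many of the prescribed divisors. Conditions (\ref{item:main1}) and (\ref{item:main2}) of Theorem \ref{Thm:main} pass automatically from each $Y_i$ to any such positive combination (the $\pi_1$-smallness hypothesis is imposed componentwise, as is the mild-intersection condition), so the entire task reduces to arranging $Y^2 \cdot H^{n-2} > 0$ and then invoking Theorem \ref{Thm:main}.

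By the Hodge index theorem, the symmetric form $\beta(A, B) := A \cdot B \cdot H^{n-2}$ on $N^1(X)_\R$ has signature $(1, \rho - 1)$. Two preparatory observations: first, since the $Y_i$ are distinct prime divisors, the cycle $Y_i \cdot Y_j$ is effective of codimension two, so $\beta(Y_i, Y_j) \geq 0$ for $i \neq j$; second, the equality $[Y_i] = [Y_j]$ in $N^1(X)_\R$ would force $\beta(Y_i, Y_j) = \beta(Y_i, Y_i) < 0$, a contradiction. Consequently the hypothesis yields more than $\rho^2 + \rho + 1$ pairwise distinct numerical classes in the $\rho$-dimensional space $N^1(X)_\R$.

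Writing $[Y_i] = a_i [H] + w_i$ with $a_i = (Y_i \cdot H^{n-1})/H^n > 0$ and $w_i$ in the $(\rho-1)$-dimensional subspace $H^\perp$ on which $-\beta$ is positive definite (with induced inner product $\langle\cdot,\cdot\rangle$), the hypotheses translate to $\|w_i\|^2 > a_i^2 H^n$ and $\langle w_i, w_j \rangle \leq a_i a_j H^n$ for $i \neq j$. For $Y = \sum n_i Y_i$ with $n_i \in \Z_{\geq 0}$ we then compute
\[
Y^2 \cdot H^{n-2} \;=\; \Bigl(\sum n_i a_i\Bigr)^{2} H^n \;-\; \Bigl\|\sum n_i w_i\Bigr\|^2,
\]
so the goal becomes: find $n_i \geq 0$, not all zero, making the right-hand side positive.

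The crux---and the main obstacle---is this extremal linear-algebra statement tied to the numerical bound $\rho^2 + \rho + 1$. I would exploit that the Gram matrix of the $w_i$'s has rank at most $\rho - 1$, so the matrix $M_{ij} = a_i a_j H^n - \langle w_i, w_j \rangle$ (with negative diagonal and non-negative off-diagonal entries) has rank at most $\rho$. Among more than $\rho^2 + \rho + 1$ rows of $M$, a pigeonhole/Ramsey-type argument on the pairwise intersections should produce a finite subset $I$ and positive integers $n_i$ for which the off-diagonal non-negative contributions $2 \sum_{i < j \in I} n_i n_j \beta(Y_i, Y_j)$ overwhelm the negative diagonal term $\sum_{i \in I} n_i^2 \beta(Y_i, Y_i)$, yielding the desired $Y$ and completing the proof via Theorem \ref{Thm:main}.
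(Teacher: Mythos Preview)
Your overall strategy is exactly the one the paper uses: produce an effective combination $Y=\sum n_iY_i$ with $Y^2\cdot H^{n-2}>0$ and then apply Theorem~\ref{Thm:main}. The paper disposes of the ``crux'' you isolate in a single sentence by citing \cite[Lemma~4.1]{MVZ09}, which is precisely the statement that more than $\rho^2+\rho+1$ prime divisors with $Y_i^2\cdot H^{n-2}<0$ admit a positive integral combination whose square against $H^{n-2}$ is strictly positive.

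The gap in your proposal is that you have not actually proved that lemma. Your setup with the Hodge index decomposition is correct, and you correctly record the sign constraints $\|w_i\|^2>a_i^2H^n$ and $\langle w_i,w_j\rangle\le a_ia_jH^n$ for $i\neq j$. But the sentence ``a pigeonhole/Ramsey-type argument on the pairwise intersections should produce a finite subset $I$ and positive integers $n_i$\ldots'' is not an argument; it is a hope. The bound $\rho^2+\rho+1$ is not an obvious Ramsey number, and nothing in your outline explains how the rank-$\rho$ constraint on $M$ forces the off-diagonal contributions to dominate for some positive vector. The actual proof in \cite{MVZ09} proceeds differently: one shows that if no such positive combination exists, then the classes $[Y_i]$ span a subspace of $N^1(X)_\R$ on which $\beta$ is negative semidefinite, hence of dimension at most $\rho-1$; one then bounds the number of prime divisors with negative square that can lie in such a subspace, and this is where the quantity $\rho^2+\rho+1$ arises (via a count involving the facets of the cone they generate). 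Either cite that lemma, as the paper does, or supply the full argument; the present sketch does not close the gap.
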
  

For the proof of Theorem \ref{Thm:main}, we follow closely the strategy of the original proof in Lasell and Ramachandran \cite{LR96} and the proof in Zuo \cite{Zuo99}. However, we need to pay more price  while dealing with some issues coming from the boundary. Applying the celebrated result -- the existence of harmonic maps associated to $p$-adic representations of fundamental groups of quasiprojective varieties -- due to Jost and Zuo \cite{JZ97}, one can show that the Betti moduli space of the quasiprojective variety $U$ is of dimension 0. In this step, one notices that either the spectral covering coming from the harmonic map given by Jost and Zuo is \'etale over $\pi_1$-trivial (see Definition \ref{def:pi1small}) divisors or the $\pi_1$-trivial divisors lie in the branched loci of the spectral covering. This observation helps to tackle problems raised from the boundary divisor. Once we get that the Betti moduli space of $U$ is of dimension 0, using some deformation and perturbation methods in Mochizuki \cite{Moc06} and \cite{AHL19}, we get that any semisimple representation of $\pi_1(U)$ underlies a complex variation of Hodge structures with quasi-unipotent monodromy along the boundary divisor. Via the covering trick due to Kawamata, we then reduce the proof to the case of a complex variation of Hodge structures with unipotent monodromy. Then using some semi-negativity results due to Zuo \cite{Zuo00}, we finally get that for a semisimple representation $\rho$ of $\pi_1(U)$, there is a finite index subgroup of $\pi_1(U)$ having finite image via the representation $\rho$. This implies that any representation of $\pi_1(U)$ is finite. \\

Now we turn to the study of $\pi_1$-small divisors on projective surfaces with infinite fundamental groups. We first have the following proposition regarding a special behaviour of configurations of $\pi_1$-small curves on surfaces. 

\begin{proposition}\label{C}
Let $X$ be a smooth projective surface with infinite fundamental group $\pi_1(X)$. Suppose there are infinitely many irreducible $\pi_1$-small curves $\{C_i\}_{i\in \Lambda}$ on $X$ such that $C_i^2>0$. Then either
\begin{enumerate}
\item \label{glf1}there exists a finite set $T=\{p_1, \ldots, p_l\}$ of closed points of $X$ such that $C_i\cap T\not=\emptyset$ for all but finitely many $i\in \Lambda$; Or  
\item \label{glf2}$X$ has generically large fundamental group. 
\end{enumerate} 
On the other hand, suppose there are infinitely many irreducible $\pi_1$-small curves $\{C_i\}_{i\in \Lambda}$ on $X$ such that $C_i^2<0$. Then $X$ has generically large fundamental group. 
\end{proposition}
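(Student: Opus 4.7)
The plan is to translate both statements into properties of the Shafarevich map. Recall from Campana--Koll\'ar that any smooth projective variety $X$ admits a dominant almost-holomorphic rational map $\Sh_X \colon X \dashrightarrow \Sh(X)$, unique up to birational equivalence, whose general fiber is the maximal connected closed subvariety $Z$ through a very general point such that $\mathrm{im}(\pi_1(\widehat{Z}) \to \pi_1(X))$ is finite. By definition, $X$ has generically large fundamental group iff $\Sh_X$ is birational; in the surface case this amounts to $\dim \Sh(X) = 2$. In particular, every irreducible $\pi_1$-small curve on $X$ is contracted by $\Sh_X$.

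For the first statement, suppose $X$ does \emph{not} have generically large fundamental group; I will deduce \ref{glf1}. Since $\pi_1(X)$ is infinite, $\dim \Sh(X) \geq 1$ (otherwise $X$ itself is a Shafarevich fiber, forcing $\pi_1(X)$ finite), so $\dim \Sh(X) = 1$. Resolve the indeterminacy of $\Sh_X$ by a finite sequence of blow-ups $\pi \colon X' \to X$ whose centers lie over the (finite) indeterminacy locus $T = \{p_1,\dots,p_l\} \subset X$, and apply Stein factorization so that the resulting morphism $f \colon X' \to B$ has integral general fibers, with $B$ a smooth projective curve. For each $i \in \Lambda$ the strict transform $\widetilde{C}_i \subset X'$ satisfies $\pi_1(\widehat{\widetilde{C}_i}) \cong \pi_1(\widehat{C}_i)$ and is again $\pi_1$-small, so it is contracted by $f$ and hence contained in some fiber. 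By Zariski's lemma $(\widetilde{C}_i)^2 \leq 0$. If some $C_i$ misses $T$, then $\pi$ is an isomorphism in a neighborhood of $C_i$, so $(\widetilde{C}_i)^2 = C_i^2 > 0$, a contradiction. Hence $C_i \cap T \neq \emptyset$ for every $i \in \Lambda$, yielding \ref{glf1}.

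For the second statement, argue by contradiction: assume $X$ is not generically large, and construct $\pi \colon X' \to X$ and $f \colon X' \to B$ exactly as above. Each $\widetilde{C}_i$ is a component of some fiber of $f$, and
\[
(\widetilde{C}_i)^2 \;=\; C_i^2 - \sum_{p \in T} m_p(C_i)^2 \;\leq\; C_i^2 \;<\; 0,
\]
so by Zariski's lemma $\widetilde{C}_i$ cannot be proportional to an entire fiber class of $f$; it must be a proper component of some \emph{reducible} fiber. But $f$ has only finitely many reducible fibers and each has only finitely many components, so only finitely many $\widetilde{C}_i$ can occur, contradicting $|\Lambda| = \infty$.

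The main difficulty is really conceptual rather than computational: one must recognize the role of the Shafarevich map of Campana--Koll\'ar and verify that $\pi_1$-smallness forces these curves to be contracted by it. Once this is in place, both halves of the proposition reduce to standard facts about surface-to-curve fibrations, namely Zariski's lemma and the finiteness of the reducible-fiber locus.
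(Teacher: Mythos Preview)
Your argument has a genuine gap at the sentence ``In particular, every irreducible $\pi_1$-small curve on $X$ is contracted by $\Sh_X$.'' This does \emph{not} follow from the Campana--Koll\'ar theorem. That theorem (Theorem~\ref{Thm:Shafarevich} in the paper) only asserts the equivalence ``$Z$ is contracted $\Leftrightarrow$ $Z$ is $\pi_1$-small'' for subvarieties $Z$ lying \emph{outside} a countable union of proper closed subvarieties. A $\pi_1$-small curve lying in the exceptional locus could, a priori, dominate the base $B$; nothing in the definition rules this out.

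Ruling it out is exactly the content of the paper's proof. If some $\pi_1$-small $\widetilde C_i$ dominated $B$, then on the open part $\widehat X^o$ where $f$ is smooth, the image $N$ of $\pi_1(F)$ is normal in $\pi_1(\widehat X^o)$, its image in $\pi_1(\widehat X)$ is finite (the general fiber is $\pi_1$-small), and the subgroup generated by this image together with $\widehat i_*\pi_1(\widetilde C_i)$ is finite. But by \cite[Proposition~1.4]{Camp91} that subgroup has finite index in $\pi_1(\widehat X)\cong\pi_1(X)$, contradicting $|\pi_1(X)|=\infty$. You need to insert this argument (or cite it) before you can assert that each $\widetilde C_i$ lies in a fiber.

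Once that gap is filled, your route via Zariski's lemma is correct and is a pleasant repackaging of the paper's argument. The paper argues the contrapositive: it uses the numerical hypotheses ($C_i^2>0$ with (\ref{glf1}) failing, respectively infinitely many $C_i$ with $C_i^2<0$) to locate a $\pi_1$-small curve $D$ dominating $\Sh(X)$, and then runs the Campana argument above to reach a contradiction. Your version front-loads that same Campana step to force all $\widetilde C_i$ into fibers, and then finishes with surface-fibration bookkeeping (Zariski's lemma, finiteness of reducible fibers). The key input, \cite[Proposition~1.4]{Camp91}, is identical in both; neither approach avoids it.
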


Refer to Definition \ref{def:glfg} for the definition of generically large fundamental groups. In fact, if $X$ has generically large fundamental group, then $X$ admits at most countably many $\pi_1$-small curves.  In a somewhat different direction, we use the method in the proof of Proposition \ref{C} together with the known results mentioned below to study hyperbolicity properties of smooth projective surfaces of general type with fundamental groups admitting finite dimensional linear representations of infinite images. 

\begin{conjecture}[Geometric Lang's conjecture]
Let $X$ be a smooth projective variety of general type. Then the union of
all irreducible positive dimensional subvarieties of $X$ not of general type is a proper 
closed algebraic subset of $X$.
\end{conjecture}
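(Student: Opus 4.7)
The statement labelled as a conjecture here is the celebrated \emph{Geometric Lang Conjecture}, which is a longstanding open problem; any proposal must therefore be aspirational, and I will aim only at the special case accessible to the framework developed in this paper, namely smooth projective surfaces $X$ of general type whose fundamental group admits a finite-dimensional linear representation $\rho$ with infinite image. In that setting a positive-dimensional proper subvariety is a curve, and ``not of general type'' means the geometric genus of its normalization is at most $1$; the plan is to show that the union of such curves is a proper Zariski-closed subset of $X$.

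First, for any such curve $C\subset X$ with normalization $\widehat{C}$, the composition $\pi_1(\widehat{C})\to\pi_1(X)\xrightarrow{\rho}\GL(m,\C)$ has virtually abelian image. After replacing $X$ by a suitable finite \'etale cover -- exploiting that $\rho(\pi_1(X))$, being infinite and linear, has a residually finite quotient killing the images of all such $\pi_1(\widehat C)$ -- each such $C$ becomes $\pi_1$-small in the sense of Definition~\ref{def:pi1small}. This is the step that puts us in the situation studied throughout the paper.

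Next, I would feed this infinite family of $\pi_1$-small curves into Proposition~\ref{C} and Corollary~\ref{Cor:infiniteneg}. Only finitely many of them can satisfy $C^2<0$, by Corollary~\ref{Cor:infiniteneg}. Among those with $C^2>0$, Proposition~\ref{C} yields a dichotomy: either they all pass, up to finitely many exceptions, through a finite set $T=\{p_1,\ldots,p_\ell\}$ of points -- in which case their union is one-dimensional and we are done -- or $X$ has generically large fundamental group. In the latter case, the Shafarevich map $\Sh\colon X\dashrightarrow \Sh_\rho(X)$ has the property that every subvariety trivial for $\rho$ is contracted by $\Sh$; combined with Zuo's semi-negativity for the Higgs bundles coming from the $\C$-VHS underlying (a semisimplification of) $\rho$ -- the same input used in the proof of Theorem~\ref{Thm:main} -- together with the ampleness of $K_X$ on a general deformation of a moving curve, one can rule out a Zariski-dense family of rational or elliptic curves with trivial $\rho$-image. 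The remaining curves with $C^2=0$ can be handled separately, using that $\pi_1$-small curves of vanishing self-intersection on a minimal surface of general type are highly constrained (they are contracted by an elliptic or isotrivial fibration, which cannot be dense).

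The hard part, and the reason the conjecture is open, is to remove the hypothesis on $\pi_1$. When $\pi_1(X)$ carries only finite linear representations, the entire Shafarevich--Higgs input vanishes, and one must fall back on classical hyperbolicity: Bogomolov's theorem bounding curves of bounded geometric genus on surfaces with $c_1^2>c_2$, Miyaoka's generic semipositivity, and their higher-dimensional analogues. These are not strong enough to handle arbitrary surfaces of general type, let alone higher dimensions, which is precisely the obstruction that has blocked a full proof. My proposal should therefore be read as realistically delivering only the conditional statement for surfaces whose fundamental group admits an infinite linear representation; this is the natural contribution of the methods of this paper, and a proof of the full conjecture appears out of reach of the present techniques.
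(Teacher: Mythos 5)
This statement is not proved in the paper at all: it is stated as the (open) Geometric Lang Conjecture, and the paper explicitly says it is widely open even for surfaces; the paper's actual contribution in this direction is the weaker Theorem~\ref{Thm:main-hyperbolicity}. You correctly recognize this, so the only thing to assess is your conditional sketch, and there the argument has genuine gaps. The key step --- that every curve of geometric genus $\leq 1$ becomes $\pi_1$-small after a finite \'etale cover because its image under $\rho$ is virtually abelian --- does not work. $\pi_1$-smallness is a condition on the image of $\pi_1(\widehat{C})$ in $\pi_1(X)$ itself, not on its image in $\GL(m,\C)$, and an infinite (virtually) abelian image cannot be killed by passing to a finite-index subgroup or a finite \'etale cover; indeed the paper poses exactly this issue as the open Question~1.9 (whether genus-one curves on a variety with $b_1=0$ are $\pi_1$-small), so your sketch assumes precisely what the paper flags as unknown. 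A second gap: in the dichotomy of Proposition~\ref{C}, alternative~(\ref{glf1}) (all but finitely many curves meet a finite set $T$ of points) does \emph{not} make their union a proper closed subset --- infinitely many distinct curves through finitely many points are typically Zariski dense in a surface (think of lines through a point in $\PP^2$) --- so "we are done" does not follow. The remaining cases ($C^2=0$, and the generically-large-fundamental-group branch via semi-negativity) are asserted rather than argued.

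For comparison, the paper's conditional result is obtained by a different route that avoids $\pi_1$-smallness of the curves altogether: for rational curves it combines Theorem~\ref{Thm:L-R} (forcing $C^2\leq 0$) with the orbibundle Miyaoka--Yau--Sakai inequality to bound $K_X\cdot C$ and hence get boundedness; for genus-one curves it uses Lu--Pardini when $q(X)>1$, and when $q=0$ it projects a semisimple $\rho$ to an almost simple factor $G_0$, runs the Shafarevich map $\Sh^{\rho_0}_X$, and rules out a dense family of elliptic curves either by the group-theoretic Lemma~\ref{lem:almost-simple} together with Campana's result (when $\dim\Sh^{\rho_0}(X)=1$) or by Zuo's Chern-hyperbolicity theorem (when the Shafarevich map is birational). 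If you want to salvage your sketch, you should replace the $\pi_1$-smallness step by this representation-level argument, since that is exactly the device the paper uses to bypass Question~1.9.
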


Bogomolov \cite{Bog77} proved the geometric Lang's conjecture for surfaces of general type with $c^2_1(S)>c_2(S)$. In \cite{LM95}, Lu and Miyaoka showed that any surface of general type contains at most finitely many smooth rational curves and elliptic curves. Zuo \cite{Zuo96b} showed that any variety $X$ admitting a big Zariski dense representation to an almost simple algebraic group is Chern-hyperbolic. In particular, the geometric Lang's conjecture holds true for this kind of surfaces. The conjecture was proved for surfaces of general type with irregularity at least two by Lu \cite{Lu10} (See also \cite{LP12}). Even for surfaces, the geometric Lang's conjecture is widely open to the best of the authors' knowledge.

\begin{theorem}\label{Thm:main-hyperbolicity}
Let $X$ be a smooth projective surface of general type. Suppose there is a representation $\rho\colon\pi_1(X)\to \GL(m, \C)$ with infinite image for some $m\in \Z^+$. Then 
\begin{enumerate}
\item \label{Thm:rat-curve}$X$ contains at most finitely many (possibly singular) rational curves.

\item \label{Thm:ell-curve}Moreover, if the first Betti number $b_1(X)\not=1$, then $X$ contains at most finitely many (possibly singular) elliptic curves.
\end{enumerate}
\end{theorem}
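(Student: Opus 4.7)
The plan is to exploit the $\pi_1$-triviality of rational curves, and the $\pi_1$-smallness of certain elliptic curves, in combination with Corollary~\ref{Cor:main-proj} and Proposition~\ref{C}. Any (possibly singular) rational curve $C\subset X$ has normalization $\bbP^1$, so $\pi_1(\widehat{C})\to\pi_1(X)$ is trivial and $C$ is $\pi_1$-small. If some rational curve had $C^2>0$, then Corollary~\ref{Cor:main-proj}, applied with $Y=C$, $n=2$, and any ample $H$, would force every finite-dimensional linear representation of $\pi_1(X)$ to have finite image, contradicting the hypothesis on $\rho$. Hence $C^2\le 0$ for every rational curve, and since Lu--Miyaoka's theorem supplies only finitely many smooth rational curves on a general-type surface, it suffices to bound the singular ones.

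Suppose for contradiction that there are infinitely many singular rational curves $\{C_i\}$, and partition them by the sign of $C_i^2$. If infinitely many $C_i$ satisfy $C_i^2<0$, then Proposition~\ref{C} (second assertion) forces $X$ to have generically large fundamental group, so the Shafarevich map $\Sh_X\colon X\dashrightarrow \Sh(X)$ is birational onto its image; resolving indeterminacies gives a birational morphism $\widetilde X\to \Sh(X)$ between smooth projective surfaces that contracts every $\pi_1$-trivial curve. Since a birational morphism of smooth projective surfaces factors as a sequence of blowdowns and hence contracts only finitely many irreducible curves, this is a contradiction. If infinitely many $C_i$ satisfy $C_i^2=0$, Hilbert-scheme boundedness (after stratifying by the discrete invariant $C_i\cdot H$) yields infinitely many $C_i$ in a common numerical class $[C]$; then $C_i\cdot C_j=[C]^2=0$ forces pairwise disjointness, giving a moving family of rational curves and hence $X$ uniruled, contradicting $X$ of general type.

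Part~(\ref{Thm:ell-curve}) follows the same plan with elliptic curves $\{E_j\}$, split into those $\pi_1$-small in $X$ and those not. The $\pi_1$-small case is identical after replacing ``uniruled'' by ``elliptic-fibered'', still inconsistent with $\kappa(X)=2$. For an elliptic $E$ that is not $\pi_1$-small, the image $\pi_1(\widehat{E})\to\pi_1(X)$ is infinite, and the Albanese composition $\widehat E\to\Alb(X)$ is either non-constant or constant. In the non-constant case, $\Alb(X)$ contains a $1$-dimensional abelian subvariety, and the hypothesis $b_1(X)\ne 1$ --- interpreted as excluding the pathological case $\dim\Alb(X)=1$ where infinitely many elliptic multi-sections could proliferate --- forces $\dim\Alb(X)\ge 2$; rigidity of $1$-dim abelian subvarieties together with boundedness on a general-type surface then gives finiteness. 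In the constant case, the image of $\pi_1(\widehat E)$ lands inside $[\pi_1(X),\pi_1(X)]$ up to finite index, and a variant of the Shafarevich argument applied to a suitable quotient of $\pi_1(X)$ concludes.

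The main obstacle is the $C_i^2=0$ subcase in Part~(\ref{Thm:rat-curve}): extracting a moving family from infinitely many irreducible curves of zero self-intersection requires excluding the scenario of infinitely many distinct isotropic numerical classes each admitting only rigid representatives, a scenario that on a general-type surface must be ruled out by combining Hodge-index with the positivity of $K_X$. The non-$\pi_1$-small elliptic analysis in Part~(\ref{Thm:ell-curve}) is similarly subtle, and the condition $b_1(X)\ne 1$ is precisely tuned to prevent $\Alb(X)$ from being a single elliptic curve carrying an infinite supply of elliptic multi-sections.
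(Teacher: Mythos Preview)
Your approach to Part~(\ref{Thm:rat-curve}) has a genuine gap in the $C_i^2<0$ case. Proposition~\ref{C} tells you that infinitely many $\pi_1$-small curves with negative self-intersection force $X$ to have generically large fundamental group, i.e.\ $\Sh_X$ is birational. But this does \emph{not} imply that the (resolved) Shafarevich morphism contracts every $\pi_1$-small curve: by Theorem~\ref{Thm:Shafarevich} the contraction property holds only for subvarieties outside a countable union of exceptional loci $D_i$, and all your $C_i$ could sit inside that union. A surface with generically large fundamental group may well carry countably many $\pi_1$-small curves (cf.\ the remark after Proposition~\ref{C}), so no contradiction arises. Your $C_i^2=0$ case is, as you admit, also incomplete: infinitely many irreducible curves with $C^2=0$ need not share a numerical class, and the Hodge-index argument you allude to is not supplied.

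The paper's proof of Part~(\ref{Thm:rat-curve}) avoids this case analysis entirely. After using Theorem~\ref{Thm:L-R} (equivalently Corollary~\ref{Cor:main-proj}) to get $C^2\le 0$, it applies Miyaoka's orbibundle Miyaoka--Yau--Sakai inequality to obtain a uniform bound $K_X\cdot C\le 6c_2(X)-2K_X^2-\tfrac{5}{3}$ for every rational curve $C$; bounded canonical degree on a general-type surface gives finiteness directly.

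For Part~(\ref{Thm:ell-curve}) your sketch is far from the paper's argument and is not rigorous as stated. The paper proceeds by irregularity: if $q(X)\ge 2$ it invokes the Lopes--Pardini canonical-degree bound; in the remaining case $q(X)=0$ it replaces $\rho$ by a Zariski-dense representation $\rho_0$ into a noncommutative almost simple group (using $b_1=0$ to kill the torus factor), then analyzes the Shafarevich map $\Sh^{\rho_0}_X$. When $\dim\Sh^{\rho_0}(X)=2$ one quotes Zuo's Chern-hyperbolicity theorem; when $\dim\Sh^{\rho_0}(X)=1$, a group-theoretic argument (Lemma~\ref{lem:almost-simple}: an almost simple group has no infinite Zariski-dense almost abelian subgroup) produces the contradiction from a dominating elliptic curve. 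Your Albanese-based discussion does not capture this, and your reading of the hypothesis $b_1(X)\neq 1$ as ``excluding $\dim\Alb(X)=1$'' is off: the point is to separate $q\ge 2$ from $q=0$.
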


The statement (\ref{Thm:rat-curve}) is a direct corollary of Theorem \ref{Thm:L-R} and a generalized Miyaoka-Yau inequality. For statement (\ref{Thm:ell-curve}), we use similar methods in the proof of Proposition \ref{C} to reduce the proof to the known case in Zuo \cite{Zuo96b}, in which he studied the hyperbolicity of the spectral covering coming from the pluriharmonic map associated to a $p$-adic representation of fundamental group of $X$. 

All known infinite K\"ahler groups have some finite-dimensional
linear representations with infinite images, so Theorem \ref{Thm:main-hyperbolicity} applies to all of them. The following natural question is raised from the proof of Theorem \ref{Thm:main-hyperbolicity}. In fact, if the answer to the following question is ``yes'', then Theorem \ref{Thm:main-hyperbolicity} is a direct corollary of Theorem \ref{Thm:main} and \cite{Lu10}.
\begin{question}
Let $X$ be a smooth projective variety with trivial first Betti number $b_1(X)=0$. Is any integral curve with geometric genus 1 on $X$ a $\pi_1$-small curve?
\end{question}
\subsection*{Notation}\begin{itemize}
\item Varieties $X$ are defined over the field of complex numbers $\mathbb{C}$.
\item The fundamental group $\pi_1(X)$ is $\pi_1(X, a)$ for an unspecified base point $a$ in $X$.
\item A divisor $D$ on a normal variety $X$ is a linear combination $D\coloneqq \sum_in_iD_i$, with $D_i$ being closed irreducible subvarieties of codimension one and $n_i\in \Z$.
\end{itemize}

\noindent\textbf{Acknowledgements}

The author would like to thank Donu Arapura, Nero Budur, Ya Deng, 
Claudio Llosa Isenrich, Yongqiang Liu, and Kang Zuo for very helpful communications. This work was supported by the Research Foundation Flanders (FWO) Grant ``Topology, birational geometry and vanishing theorem for complex algebraic varieties''[project no. 1280421N].

\section{Preliminaries}

\subsection{$\pi_1$-small subvarieties}

The following type of subvarities occur naturally in the study of the Shafarevich conjecture and the construction of Shafarevich maps (aka $\Gamma$-reduction)(\cite{Camp94}, \cite{Kol93}, \cite{Kol95}). We follow \cite[Definition 2]{DOKR02} to make the following definition.
\begin{definition}\label{def:pi1small}
Let $X$ be a complex normal variety. A subvariety $Y\subset X$ is called \textsl{\textsl{$\pi_1$-small}} if for some (hence for any) resolution of singularities $\widehat{Y}\to Y$, the homomorphism $ \pi_1(\widehat{Y})\to \pi_1(X)$ of fundamental groups has finite image.  We call $Y$  \textsl{$\pi_1$-trivial} if $\pi_1(\widehat{Y})\to \pi_1(X)$ is trivial. \ A Weil divisor $D=\sum_in_iD_i$ is called a \textsl{$\pi_1$-small ($\pi_1$-trivial) divisor} if each component $D_i$ is $\pi_1$-small ($\pi_1$-trivial).
\end{definition}

Any rational subvariety is $\pi_1$-small. The smooth model of $\pi_1$-small subvariety might have large fundamental group. For example, one can take a high genus curve in the exceptional divisor of a blowup of a higher dimensional smooth variety. 
$\pi_1$-small subvarieties play a central role in the following theorem on the existence of Shafarevich maps (aka $\Gamma$-reductions) due to Campana \cite{Camp94} and Koll\'ar \cite{Kol93}, \cite{Kol95} individually.

\begin{theorem}[Campana-Koll\'ar]\label{Thm:Shafarevich}
Let $X$ be a normal variety, and $H$ be a normal subgroup of $\pi_1(X)$. Then there is a dominant rational map $\Sh^H_X\colon X\dashrightarrow\Sh^H(X)$ to a normal variety $\Sh^H(X)$ such that 
\begin{enumerate}
\item $\Sh^H_X$ has connected fibres, and

\item there are countably many closed proper subvarieties $D_i\subset X, i\in I$ such that for every irreducible subvariety $Z\subset X$ and $Z\not\subset \bigcup_i D_i$, $Z$ is contracted by $\Sh^H_X$ if and only if the image of the natural homomorphism $\pi_1(\widehat{Z})\to \pi_1(X)/H$ has finite image, where $\widehat{Z}$ is a desingularization of $Z$.
\end{enumerate}
\end{theorem}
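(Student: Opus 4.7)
The plan is to construct $\Sh^H_X$ as the quotient of $X$ by the equivalence relation generated by $H$-small subvarieties. First, I would pass to the connected \'etale Galois cover $p\colon \widetilde{X}_H \to X$ with deck group $\pi_1(X)/H$. An irreducible subvariety $Z \subset X$ has $\im(\pi_1(\widehat{Z}) \to \pi_1(X)/H)$ finite if and only if every connected component of the normalization of $p^{-1}(Z)$ maps finitely onto $Z$; this reformulation makes ``$H$-smallness'' a geometric property that can be read off from $\widetilde{X}_H$ and that descends well under specialization.

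Second, I would appeal to the fact that the Hilbert (equivalently, Chow) scheme of $X$ has only countably many irreducible components $\{T_\alpha\}_\alpha$, each carrying a universal family $\mathcal{Z}_\alpha \to T_\alpha$. For each $\alpha$, the locus $T_\alpha^{\mathrm{sm}} \subset T_\alpha$ of $H$-small members is constructible, and one checks that when the general member of a family is $H$-small then so is every member lying outside a proper closed subfamily. The countably many subvarieties $D_i$ of the theorem are then obtained by sweeping out the images of (a) families in which $H$-smallness is generically present but fails on a proper closed subset of $T_\alpha$, and (b) families whose general member fails to be $H$-small.

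Third, I would define $x \sim_H y$ whenever $x$ and $y$ can be connected by a finite chain of irreducible $H$-small subvarieties with consecutive members meeting, and then invoke Campana's \emph{almost-holomorphic quotient theorem}. This asserts that the equivalence relation generated by a countable union of algebraic families of subvarieties is almost holomorphic: there exists a rational map $\Sh^H_X \colon X \dashrightarrow \Sh^H(X)$ to a normal variety whose general fibres are exactly the equivalence classes. The proof of this quotient theorem --- which I expect to be the main obstacle --- rests on a Baire-category and generic-dimension argument: at a very general point $x \in X$, the Zariski closure of the equivalence class of $x$ must have the same dimension as for nearby very general points, and this rigidity forces the quotient to be represented by a rational map regular outside the bad locus $\bigcup_i D_i$. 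Connectedness of the fibres is immediate from the chain definition of $\sim_H$.

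Finally, statement (2) follows by unwinding the construction. Any $H$-small $Z$ is contracted by $\Sh^H_X$ tautologically, since it is a single chain of length one. Conversely, if $Z \not\subset \bigcup_i D_i$ is contracted by $\Sh^H_X$, then $Z$ lies in a general fibre, which by the almost-holomorphic quotient theorem is precisely a $\sim_H$-equivalence class; combined with the exclusion of the bad locus and the fact that general members of the relevant Hilbert component are $H$-small, one concludes that $Z$ itself is $H$-small, completing the characterisation.
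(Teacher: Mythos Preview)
The paper does not prove this theorem: it is stated as a known result attributed to Campana and Koll\'ar, with references to \cite{Camp94}, \cite{Kol93}, \cite{Kol95}, and no argument is given in the text. So there is no ``paper's own proof'' to compare against.

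That said, your outline follows the standard strategy from those references, particularly Campana's construction of the $\Gamma$-reduction via the almost-holomorphic quotient by chains of $H$-small subvarieties, combined with the countability of components of the Chow/Hilbert scheme. The skeleton is correct, but several steps you flag as routine are in fact the substance of the cited works: the constructibility and specialization behaviour of $H$-smallness in families, and especially the passage from an equivalence relation generated by countably many algebraic families to an \emph{almost holomorphic} rational map, require real work (this is the heart of Campana's argument and also of Koll\'ar's independent approach). Your final paragraph is also too quick: a subvariety $Z$ contracted by $\Sh^H_X$ need not lie in a single equivalence class a priori, and one must argue via the characterisation of very general fibres together with a limiting argument to conclude that $Z$ itself is $H$-small. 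If you intend to actually supply a proof rather than cite the result, these points need to be filled in or referenced precisely.
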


The map $\Sh^H_X$ and the normal variety $\Sh^H(X)$ are called the \textsl{Shafarevich map} and the \textsl{Shafarevich variety} of $X$ with respect to $H$, respectively. The Shafarevich map and the Shafarevich variety are unique up to birational equivalence. If $H=\{0\}$, then we simply denote $\Sh_X\coloneqq\Sh_X^{\{0\}}$ and $\Sh(X)\coloneqq\Sh^{\{0\}}(X)$. Notice that $\pi_1(X)$ is finite if and only if $\dim \Sh(X)=0$. If $H$ is the kernel of a representation $\rho\colon \pi_1(X)\to \GL(m, \C)$, then we write $\Sh^{\rho}_X\coloneqq\Sh_X^{H}$ and $\Sh^{\rho}(X)\coloneqq\Sh^{H}(X)$.

\begin{definition}\label{def:glfg}
A normal variety $X$  has \textsl{generically large fundamental group}, if $X$ satisfies the following two equivalent conditions
\begin{enumerate}
\item For any very general point $x\in X$ and any positive dimensional subvariety $Y$ passing through $x$, the image $\pi_1(\widehat{Y})\to\pi_1(X)$ is infinite, where $\widehat{Y}$ is a desingularization of $Y$.
\item The Shafarevich map $\Sh_X$ of $X$ is birational.  
\end{enumerate}
\end{definition}

\subsection{Representations of fundamental groups}

In this subsection, we recall several terminologies on representations of finitely presented groups.

Let $G$ be a finitely presented group with a set of generators $\{g_1,\ldots, g_r\}$. A linear representation $\rho\colon G\to \GL(m, \C)$ can be viewed as a closed point of the representation variety $\Hom(G, \GL(m, \C))\subset\GL(m ,\C)^r$.
\begin{definition}\label{df:bettimoduli}
The $m$-th \textsl{Betti moduli space} $\MB(G, m)$ of $G$ is defined to be the GIT quotient $\Hom(G, \GL(m, \C))/\!\!/ \GL(m, \C)$ by the simultaneous conjugate action of $\GL(m, \C)$. When $G\coloneqq\pi_1(X)$ is the fundamental group of an algebraic  variety $X$, we denote $\MB(G, m)$ by $\MB(X,m)$.
\end{definition}

Given a representation $\rho\colon G\to\GL(m, \C)$, $\rho$ is called \textsl{rigid} if the corresponding closed point $[\rho]\in\MB(G, m)$ is isolated. We call $\rho$ \textsl{semisimple} if it is a direct sum of irreducible representations, and we call $\rho$ \textsl{unitary} if the image $\rho(G)$ is contained in the unitary group $U(m)$. The closed points of the Betti moduli space $\MB(G, m)$ one-to-one correspond to $m$ dimensional complex semisimple representations of $G$.

\subsection{Parabolic Higgs bundles and complex variation of Hodge structures}\label{sect:PHBCVHS}

In this section, we give a brief discussion on parabolic Higgs bundles and the complex variation of Hodge structures, and recall some results that will be used in the proof of our main theorem. We first recall the definition of parabolic Higgs bundles with the notation in \cite[Section 3]{DH19}. Let $X$ be a complex smooth projective variety with $D=\sum_{i=1}^sD_i$ a simple normal crossing divisor. Denote $U\coloneqq X-D$ and the inclusion $j\colon U\hookrightarrow X$.

\begin{definition}\label{df:para-Higgs}
A \textsl{parabolic Higgs bundle} $(E, \ba E, \theta)$ on $(X, D)$ is a locally free $\mathcal{O}_U$-module $E$, together with an $\mathbb{R}^s$-indexed filtration $\ba E$ (\textsl{parabolic structure}) by locally free subsheaves such that
\begin{enumerate}
\item \label{def:parab1} $\bfa\in\mathbb{R}^s$ and $\ba E|_U=E$;

\item \label{def:parab2} For $1\leq i\leq s$, ${}_{\bm{a}+\bm{1}_i}E=\ba E(-D_i)$, where $\bm{1}_i=(0,\ldots,0,1,0,\ldots,0)$ with $1$ in the $i$-th component;

\item \label{def:parab3} ${}_{\bm{a}-\bm{\epsilon}}E=\ba E$ for any vector $\bm{\epsilon}=(\epsilon,\ldots, \epsilon)\in \mathbb{R}^s$ with $0<\epsilon\ll 1$;

\item \label{def:parab4} The set of \textsl{jump weight vectors} $\{\bm{a}\mid\ba E/{}_{\bm{a}+\bm{\epsilon}}E \neq0$ for any vector $\bm{\epsilon}=(\epsilon,\ldots, \epsilon)\in \mathbb{R}^s$ with $0<\epsilon\ll 1\}$ is discrete in $\mathbb{R}^s$,
\end{enumerate}
and an $\mathcal{O}_X$-linear map (\textsl{Higgs field}) \[\theta\colon {}_{\bm{0}}E\to \Omega_X^1(\log D)\otimes {}_{\bm{0}}E\] such that
\begin{enumerate}
\item \label{def:parab5} $\theta\wedge\theta=0$, and

\item \label{def:parab6} $\theta(\ba E)\subseteq\Omega_X^1(\log D)\otimes\ba E$ for $\bm{a}\in [0,1)^s$.
\end{enumerate}
\end{definition}

We will denote ${}_{\bm{0}}E$ by $\diae$. 
Note that the parabolic structure is uniquely determined by the filtration for weights in $[0,1)^s$. We call $(E, \ba E, \theta)$ a \textsl{parabolic Higgs sheaf}, if we only assume that $E$ is torsion free and $\ba E$ are coherent subsheaves in the above definition (This terminology will be used in the stability condition in Section \ref{sect:st-chern}).

One important class of parabolic Higgs bundle on $(X, D)$ comes from variation of Hodge structures over $U$. We recall the definition of a  complex polarized variation of Hodge
structures ($\C$-PVHS for short) from the $\scrC^\infty$ point of view \cite{Simp92}. 

\begin{definition}\label{df:CVHS}
A complex polarized variation of Hodge structures of weight $w$ over $U=X-D$ is a triple $(V, \mathcal{D}, k_V)$ consisting of a $\scrC^{\infty}$-vector bundle $V$ with a decomposition into $\scrC^{\infty}$-subbundles $$V=\oplus_{i+j=w}V^{i,j};$$ a flat connection $\mathcal{D}$ satisfying Griffiths's transversality $$\mathcal{D}\colon V^{i,j} \longrightarrow A^{0,1}(V^{i+1, j-1})\oplus
A^{1,0}(V^{i,j})\oplus A^{0,1}(V^{i, j}) \oplus A^{1,0}(V^{i-1, j+1})$$ with $A^{p,q}(V^{i,j})$ being the space of $\scrC^{\infty}$ $(p,q)$-forms valued in $V^{i,j}$; and a flat hermitian form $k_V$ which is positive (negative) definite on $V^{i,j}$
when $i$ is even (odd) such that the decomposition $V=\oplus_{i+j=w}V^{i,j}$ is orthogonal with respect
to $k_V$. 
\end{definition}

\begin{remark}\label{rmk:vhs-higgs-const} Decompose
$\mathcal{D}$ into operators of types $(1,0)$ and $(0,1)$ 
$$
\mathcal{D}=\nabla+\bar{\partial} 
.$$
Then $\bar{\partial}$ defines a complex structure on $V$. Let $\mathcal{V}$ denote the corresponding holomorphic bundle. Then we have holomorphic subbundles $F^p\mathcal{V}=\oplus_{i\geq p}V^{i,j}$ and a holomorphic flat connection $\nabla$ such that $\nabla(F^p\mathcal{V})\subset
\Omega_U^1\otimes F^{p-1}\mathcal{V}$. The graded holomorphic bundle 
 $E\coloneqq\Gr_F\mathcal{V}$ carries holomorphic subbundles $E^{i, w-i}=\Gr^i_F\mathcal{V}/\Gr^{i+1}_F\mathcal{V}$ and a Higgs field $\theta\coloneqq\Gr_F\nabla\colon E^{i, w-i}\to \Omega_U^1\otimes E^{i-1, w-i+1}$ which is $\mathcal{O}_U$-linear. Then $(E, \theta)$ is a Higgs bundle on $U$. To explain how to extend the Higgs bundle $(E, \theta)$ to a parabolic Higgs bundle over $(X, D)$, we recall a more general way to construct Higgs bundle from a flat bundle with harmonic metric in \cite{Simp92}.\end{remark}
 
Given a $\scrC^{\infty}$ flat bundle $(V, \mathcal{D})$ with Hermitian metric $K$, we can decompose
$\mathcal{D}=\mathcal{D}^{1,0}+\mathcal{D}^{0,1}$ into operators of types $(1,0)$ and $(0,1)$. Let $\delta'$ and $\delta''$ be operators of type $(1, 0)$ and $(0, 1)$ such
that \[\langle\mathcal{D}^{0,1}u, v\rangle_K+\langle u, \delta'v\rangle_K=\mathcal{D}^{0,1}\langle u, v \rangle_K;\]
\[\langle\delta''u, v\rangle_K+\langle u, \mathcal{D}^{1,0}v\rangle_K=\delta''\langle u, v \rangle_K,\]
for all local sections $u, v$ of $V$. Define \[\bar{\partial}\coloneqq\frac{1}{2}(\mathcal{D}^{0,1}+\delta'');\ \  \theta\coloneqq\frac{1}{2}(\mathcal{D}^{1,0}-\delta').\]
We call the triple $(V, \mathcal{D}, K)$ a \textsl{harmonic bundle} if $\Lambda(\bar{\partial}+\theta)^2=0$, where $\Lambda$ is the usual adjoint operator of wedge of K\"ahler form coming from a fixed polarization of $X$. In this case,  we get a Higgs bundle $(E, \theta)$ (see e.g., \cite[Lemma 1.1]{Simp92}), where $E$ is the holomorphic vector bundle given by $V$ together with the new complex structure $\bar{\partial}$. 
$(V, \mathcal{D}, K)$ is called a \textsl{tame harmonic bundle} if it is a harmonic bundle and the eigenvalues
of the associated Higgs field $\theta$ (which are multivalued 1-forms) have poles of order
at most 1 around the boundary divisor $D$. 
For the flat bundle $(V, \mathcal{D}, k_V)$ coming from a $\C$-PVHS above, by changing the sign of $k_V|_{V^{i,j}}$ for odd $i$, we get a flat positive definite Hermitian form $K_V$. Then $(V, \mathcal{D}, K_V)$ is a tame harmonic bundle(See e.g., \cite[Proposition 5.4]{AHL19}). 

\begin{proposition}\label{thm:vhs-tame}
Let $(V, \mathcal{D}, k_V)$ be a complex polarized variation of Hodge structures, then the corresponding flat bundle $(V, \mathcal{D}, K_V)$ is a tame harmonic bundle.
\end{proposition}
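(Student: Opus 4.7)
The plan is to verify the two conditions making up the definition of a tame harmonic bundle separately: first the pluri-harmonicity $\Lambda(\delbar+\theta)^2=0$, and then the simple-pole condition on the eigenvalues of $\theta$ along $D$.

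For pluri-harmonicity, I would decompose $\mathcal{D}$ by Griffiths transversality as
\[
\mathcal{D}=\theta+\partial+\delbar+\bar\theta,
\]
with $\partial$ and $\delbar$ preserving the Hodge grading and of types $(1,0)$, $(0,1)$ respectively, and $\theta\colon V^{i,j}\to A^{1,0}(V^{i-1,j+1})$, $\bar\theta\colon V^{i,j}\to A^{0,1}(V^{i+1,j-1})$. Flatness of the indefinite form $k_V$ combined with the sign rule $K_V|_{V^{i,j}}=(-1)^ik_V|_{V^{i,j}}$ would then yield the key adjunction relations: $\partial+\delbar$ is the Chern connection of $K_V$ on each $V^{i,j}$, while $\theta$ and $\bar\theta$ are formal $K_V$-adjoints of one another up to the flip signs. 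Plugging these into the defining formulas of $\delta'$ and $\delta''$ I expect to obtain identities of the shape $\delta'=\partial-\bar\theta^{*}$ and $\delta''=\delbar-\theta^{*}$; substituting back into $\delbar_E:=\tfrac12(\mathcal{D}^{0,1}+\delta'')$ and $\theta_E:=\tfrac12(\mathcal{D}^{1,0}-\delta')$ returns $\delbar_E=\delbar$ and $\theta_E=\theta$. This identifies Simpson's Higgs datum with the classical Hodge-theoretic one of Remark~\ref{rmk:vhs-higgs-const}, and the stronger equation $(\delbar+\theta)^2=0$ then drops out of $\mathcal{D}^2=0$ by separating bi-types, which in particular gives the pluri-harmonic equation.

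For tameness, the question is local at points of $D$, so I would work in a polydisc $\Delta^n$ with $D=\{z_1\cdots z_k=0\}$ and analyse $\theta$ in a frame of the Deligne canonical extension of the flat bundle $(V,\mathcal{D})$. Under the identification of $\theta$ with $\Gr_F\nabla$ — where $\nabla$ acquires logarithmic poles along $D$ on the Deligne extension and $F^{\bullet}$ extends as a filtration by subbundles — it suffices to control the growth of the matrix of $\theta$ in such a frame. I would invoke the Schmid / Cattani--Kaplan--Schmid norm estimates for flat sections of a polarized variation of Hodge structures: the Hodge metric $K_V$ grows polynomially in $\log|z_i|$ in a Deligne frame, and Griffiths transversality forces the matrix of $\theta$ to be a $\C$-linear combination of the $dz_i/z_i$ with bounded coefficients, so that its eigenvalues acquire poles of order at most one along $D$.

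The main obstacle is that the classical Cattani--Kaplan--Schmid asymptotics are phrased for polarized $\mathbb{R}$-variations of Hodge structures, whereas the hypothesis here allows $\C$-PVHS with a priori non-quasi-unipotent monodromy and non-real weights. My shortcut would be to appeal directly to Mochizuki's extension of the asymptotic theory to tame harmonic bundles, where the relevant norm estimates are established in the full $\C$-generality needed: combined with the first paragraph, this makes the proposition a direct reference, and the remaining bookkeeping is concentrated in the identification $(\delbar_E,\theta_E)=(\delbar,\theta)$ of the Hodge-theoretic decomposition with the one coming from the harmonic-bundle formalism.
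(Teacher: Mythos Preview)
Your sketch is sound: the harmonicity computation via the four-term Griffiths decomposition $\mathcal{D}=\theta+\partial+\delbar+\bar\theta$ and the identification $(\delbar_E,\theta_E)=(\delbar,\theta)$ is the standard argument going back to Simpson, and your tameness discussion correctly isolates the genuine content (extending Schmid's norm asymptotics to the $\C$-coefficient, non-quasi-unipotent setting) and the right reference for closing that gap.

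However, you should be aware that the paper does not actually prove this proposition. The sentence immediately preceding the statement already asserts the conclusion with a parenthetical citation ``(See e.g., \cite[Proposition 5.4]{AHL19})'', and the proposition is then recorded without proof as a known input. So your proposal is not an alternative route to the paper's argument but rather a proof where the paper supplies none; what you have written is essentially an unpacking of what the cited reference contains. Two minor comments on the write-up itself: your formulas $\delta'=\partial-\bar\theta^{*}$, $\delta''=\delbar-\theta^{*}$ are correct but slightly obscure the point, since for a polarized VHS one has $\bar\theta=\theta^{*_{K_V}}$ already, so these read $\delta'=\partial-\theta$ and $\delta''=\delbar-\bar\theta$; and for tameness you can bypass the full norm estimates once you know $\theta=\Gr_F\nabla$, since the characteristic polynomial of $\theta$ then has coefficients that are symmetric functions of the residue eigenvalues of $\nabla$ on a Deligne-type extension, which directly gives the order-one pole bound.
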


For a tame harmonic bundle $(V, \mathcal{D}, K)$, the associated Higgs bundle $(E, \theta)$ over $U$ can be extended to a parabolic Higgs bundle $(E, \ba E, \theta)$ over $X$. Around any point $p\in D$, we take an analytic neighbourhood $X_p\coloneqq\Delta^n$ in $X$ with local coordinate functions $\{z_i\}_{i=1}^n$ such that $D|_{X_p}\coloneqq\bigcup_{i=1}^r\{z_i=0\}$. 
Then 
$\ba E\subset j_*E$ is generated by local sections \[\Gamma(X_p, \ba E)\coloneqq \{v\in \Gamma(X_p\cap U, E)|\ |v|_{K}\leq C\cdot\Pi_{i=1}^s|z_i|^{-a_i-\epsilon}, \] \[\text{for some real number}\ C>0\ \text{and any}\ \epsilon>0.\}\]

\subsection{Stability and Chern classes of parabolic Higgs bundles}\label{sect:st-chern}

Parabolic Higgs bundles induced from tame harmonic bundles possess more properties on stability condition and Chern classes. In general, for a parabolic Higgs bundle
$(E, \ba E, \theta)$ on $(X,D)$, its \textsl{parabolic Chern classes} $\pc_i(E)$ are defined to be the usual Chern
classes $c_i(\diae)$ of $\diae\coloneqq{}_{\bm{0}}E$ together with modifications along the boundary divisor $D$ (see e.g., \cite[Section 3]{AHL19} and \cite[Section 3.1]{Moc06} for
more details). With a fixed ample line bundle $H$ on $X$, its \textsl{parabolic degree}
$\pd(E)$ is defined to be the intersection number $\pc_1(E)\cdot H^{n-1}$. We call $(E, \ba E, \theta)$ \textsl{$\mu_H$-stable} (\textsl{$\mu_H$-semistable}) if
for any coherent torsion free subsheaf $V$ of $\diae$, with $0< \rank V<\rank\diae$ and $\theta(V)\subseteq \Omega_X^1(\log D)\otimes V$, the condition
\[\frac{\pd(V)}{\rank(V)}<\frac{\pd(E)}{\rank(E)} (\frac{\pd(V)}{\rank(V)}\leq\frac{\pd(E)}{\rank(E)})\]
is satisfied (Here $V$ carries the induced parabolic structure from $(E, \ba E, \theta)$, i.e., $\ba V\coloneqq V\cap \ba E$. Hence one can still define the parabolic degree of this induced parabolic Higgs sheaf). A parabolic Higgs bundle $(E, \ba E, \theta)$ is called \textsl{$\mu_H$-polystable} if it is a direct sum of $\mu_H$-stable
parabolic Higgs bundles. A parabolic Higgs bundle $(E, \ba E, \theta)$ is called \textsl{locally abelian} if Zariski locally $(E, \ba E)$ is a direct sum of filtered line bundles (see e.g.,\cite{IS07}). Then we are ready to state the following correspondence by Simpson \cite[Main Theorem]{Simp90} for noncompact
curves and Mochizuki \cite[Theorem 1.4]{Moc06} for higher dimensional quasiprojective varieties.
\begin{theorem}[Kobayashi-Hitchin Correspondence]\label{Thm:K-H-corresp}
Let $X$ be a smooth complex projective variety with a simple normal crossing divisor $D$ and an ample line bundle $H$. Then there is a one-to-one correspondence between tame harmonic bundles $(V, \mathcal{D}, K_V)$ and $\mu_H$-polystable locally abelian parabolic Higgs bundles $(E, \ba E, \theta)$ with trivial parabolic Chern classes.
\end{theorem}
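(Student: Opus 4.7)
The plan is to establish the correspondence in both directions, following the nonabelian Hodge theory framework. For the direction \emph{tame harmonic $\Rightarrow$ $\mu_H$-polystable locally abelian parabolic Higgs with trivial parabolic Chern classes}, I would start with a tame harmonic bundle $(V,\mathcal{D},K)$ on $U$. The construction in Section \ref{sect:PHBCVHS} already produces a Higgs bundle $(E,\theta)$ on $U$, and the formula at the end of that section defines an $\mathbb{R}^s$-indexed prolongation $\ba E$ across $D$ using the growth of the harmonic norm. Tameness guarantees that $\theta$ extends to a logarithmic Higgs field on $(X,D)$, and the local asymptotic normal form for tame harmonic bundles in a polydisc transverse to $D$ (simultaneous diagonalization of the residues of $\theta$ up to a semisimple/nilpotent decomposition) forces $(E,\ba E)$ to be locally abelian. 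Polystability is then a Chern--Weil argument: any saturated Higgs subsheaf violating the slope inequality would produce a subbundle on $U$ on which the mean-curvature identity for the harmonic metric fails, once the boundary contribution has been accounted for by the parabolic weights. The same Chern--Weil input, together with the definition of $\pc_i(E)$ as the usual $c_i(\diae)$ modified by weight terms along $D$, yields the vanishing of the parabolic Chern classes.

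For the converse direction, start with a $\mu_H$-polystable locally abelian parabolic Higgs bundle $(E,\ba E,\theta)$ with $\pc_i(E)=0$. The target is a Hermite--Einstein harmonic metric. First I would build an initial smooth Hermitian metric $h_0$ on $E|_U$ \emph{adapted} to the parabolic structure, i.e., whose norm near each $D_i$ behaves like $\prod_i|z_i|^{-a_i}$ on the $\bfa$-piece of the filtration; the locally abelian assumption makes this possible in Zariski-local charts, which can then be glued by a partition of unity. Next I would run a Donaldson-type Yang--Mills--Higgs heat flow (or equivalently apply the continuity method to the Hermite--Einstein equation for the operator $\bar\partial_E+\theta$) starting from $h_0$ on an exhaustion of $U$ by compact subdomains, and extract a subsequential limit with controlled boundary behavior. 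Polystability of $(E,\ba E,\theta)$ gives long-time existence and convergence of the flow, while $\pc_i(E)=0$, combined with a parabolic Bogomolov--Gieseker--Chern--Weil identity, forces the Einstein factor to vanish. The resulting limit $h$ makes $(V,\mathcal{D})$ into a harmonic bundle reconstructing $(E,\theta)$ in the sense of Remark \ref{rmk:vhs-higgs-const}, and the preserved boundary behavior of $h_0$ along the flow yields tameness.

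The hard part is the analysis near $D$. On the noncompact locus $U$ everything follows the Simpson/Hitchin--Kobayashi playbook, but controlling the Hermite--Einstein equation, the Chern--Weil forms, and the norm estimates in a tubular neighbourhood of $D$ requires the full strength of Mochizuki's theory of prolongations. In particular, one must show (i) that $\mu_H$-polystability need only be tested against saturated parabolic Higgs subsheaves with the induced filtration, so that the destabilizing objects produced by the heat flow really are parabolic subsheaves; (ii) that a parabolic-adapted initial metric has curvature integrable in the parabolic sense, so that Donaldson's functional and the Chern--Weil integrals converge; and (iii) that the limiting harmonic metric realises exactly the prescribed weights $\bfa$ and preserves the locally abelian decomposition, rather than only a weaker prolongation. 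Once these three boundary estimates are in place, the compact-manifold arguments of Simpson carry over essentially verbatim, and the bijectivity of the correspondence follows from the uniqueness of harmonic metrics up to flat automorphisms and the uniqueness of the parabolic prolongation associated to a tame harmonic bundle.
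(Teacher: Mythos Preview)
Your outline is a reasonable high-level sketch of the Mochizuki program, but you should be aware that the paper does \emph{not} give a proof of this statement at all: Theorem~\ref{Thm:K-H-corresp} is quoted as a known result, attributed to Simpson \cite[Main Theorem]{Simp90} in the curve case and to Mochizuki \cite[Theorem 1.4]{Moc06} in higher dimensions. The sentence immediately following the theorem simply records that ``the correspondence coincides with the above construction from a tame harmonic bundle to a parabolic Higgs bundle.'' So there is nothing to compare your proposal against inside the paper.

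That said, a word of caution on the content of your sketch. The direction from tame harmonic to polystable parabolic Higgs is essentially as you describe, but the boundary analysis you flag in (i)--(iii) is not a matter of ``carrying over Simpson's compact arguments essentially verbatim'' once three estimates are in hand; Mochizuki's work \cite{Moc06,Moc07b} runs to several hundred pages precisely because the prolongation theory, the compatibility of the parabolic filtration with the Hodge filtration of the limiting mixed twistor structure, and the perturbation of weights all interact nontrivially. In particular, your step (iii) --- that the limiting harmonic metric realises exactly the prescribed weights $\bfa$ --- is the heart of the matter and requires the full asymptotic decoupling of the Higgs field near $D$. If you were actually asked to supply a proof rather than a citation, your outline would be a sound roadmap but would need to point explicitly to the relevant chapters of \cite{Moc06} for each of (i), (ii), (iii). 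For the purposes of this paper, however, the correct response is simply to cite Simpson and Mochizuki, as the author does.
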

The correspondence coincides with the above construction from a tame harmonic bundle to a parabolic Higgs bundle. To end this subsection, we recall the following perturbation trick in \cite{AHL19}. For a parabolic Higgs bundle $(E, \ba E, \theta)$, one can fix the filtered locally free subsheaves in the parabolic structure, i.e., the set of locally free sheaves $\{\ba E| \bm{a}\in [0,1)^s\}$, and perturb each jump weight vector in $\{\bm{a}\mid\ba E/{}_{\bm{a}+\bm{\epsilon}}E \neq0$ for any vector $\bm{\epsilon}=(\epsilon,\ldots, \epsilon)\in \mathbb{R}^s$ with $0<\epsilon\ll 1\}$. More specifically, for each jump weight vector $\bm{a}=(a_1, \ldots, a_s)\in [0,1)^s$, one can perturb $\bm{a}$ to be $(a_1+\epsilon_1, \ldots, a_s+\epsilon_s)$ with $0\leq\epsilon_i\ll1$ and redefine the jump weight vector of the locally free subsheaf $\ba E$ to be $(a_1+\epsilon_1, \ldots, a_s+\epsilon_s)$. After we perturb all jump weight vectors in $[0,1)^s$, we get a new parabolic Higgs bundle $(E, \baa E, \theta)$. We say that $(E, \ba E, \theta)$ is \textsl{$\epsilon$-close} to $(E, \baa E, \theta)$, if for each locally free subsheaf $F$ in the parabolic structure of  $(E, \ba E, \theta)$ and $(E, \baa E, \theta)$, we have $|a_i-a'_i|\leq\epsilon$ for each $1\leq i\leq s$, where $(a_1, \ldots, a_s)$ and $(a'_1, \ldots, a'_s)$ are the jump weight vectors of $(E, \ba E, \theta)$ and $(E, \baa E, \theta)$ corresponding to $F$, respectively.
\begin{proposition}[{\cite[Lemma 3.3, Lemma 6.1]{AHL19}}]\label{lem:Perturb}
Let $(E, \ba E, \theta)$ be a locally abelian $\mu_H$-polystable parabolic Higgs bundle with trivial
parabolic Chern classes on $(X, D)$. For any $0\leq\epsilon\ll1$, there exists a locally abelian $\mu_H$-polystable parabolic Higgs bundle  $(E, \baa E, \theta)$ with trivial parabolic Chern classes such that $(E, \baa E, \theta)$ is $\epsilon$-close to $(E, \ba E, \theta)$ and all jump weight vectors of $(E, \baa E, \theta)$ are in $\Q^s$.
\end{proposition}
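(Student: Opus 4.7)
The plan is to reduce to $\mu_H$-stable summands, analyze the polynomial dependence of parabolic Chern classes on the jump weights, and then find rational weights nearby that still satisfy the Chern class constraints and preserve stability.

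First, I would decompose $(E, \ba E, \theta) = \bigoplus_k (E_k, \ba E_k, \theta_k)$ into its $\mu_H$-stable direct summands. Polystability together with the triviality of parabolic Chern classes forces each stable summand $(E_k, \ba E_k, \theta_k)$ to have trivial parabolic Chern classes individually: for $\pc_1$ this is because every summand in a polystable decomposition shares the slope zero of $E$; for higher $\pc_k$, one invokes the Bogomolov--Gieseker-type inequality for $\mu_H$-stable parabolic Higgs bundles with vanishing $\pc_1$, together with the equality condition forced by $\pc_2(E) = 0$. The problem thus reduces to perturbing the jump weights of a single locally abelian $\mu_H$-stable parabolic Higgs bundle with trivial parabolic Chern classes.

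Second, I would express the parabolic Chern classes as polynomials in the jump weights. Because $(E, \ba E)$ is locally abelian, Zariski-locally the filtered structure is a direct sum of parabolic line bundles, and the graded pieces along each boundary component $D_i$ are pinned down by the jump weight vectors in $[0,1)^s$. The parabolic Chern classes then take the shape $\pc_k(E) = c_k(\diae) + Q_k(\bfa)$, where $Q_k$ is a polynomial in the components of the jump weight vectors whose coefficients are intersection numbers of the $D_i$ with each other and with $c_j(\diae)$. These coefficients are integers (or rationals), so the trivial-Chern-class locus in weight space is cut out by polynomial equations with rational coefficients, of which the original $\bfa$ is a real solution.

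Third, I would construct the rational perturbation. For any $\epsilon > 0$, choose an initial rational approximation $\bfa''\in \Q^s$ within $\epsilon/2$ of $\bfa$ componentwise. The resulting $\pc_k$ is $O(\epsilon)$ away from zero; to correct this, one uses that in each $\pc_k$ the dependence on weights is linear in the first-order term (explicitly so for $\pc_1$, and polynomially with controllable derivatives for higher $k$), while the number of weight degrees of freedom typically exceeds the few Chern class constraints. Adjusting the rational perturbation along the rational tangent directions of the trivial-Chern-class locus (i.e.\ solving a small linear system over $\Q$ with $O(\epsilon)$ right-hand side) produces $\baa \in \Q^s$ within $\epsilon$ of $\bfa$ on which all $\pc_k$ vanish exactly. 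Finally, $\mu_H$-stability persists because it is defined by strict slope inequalities and the slopes of all relevant saturated sub-Higgs sheaves vary continuously with the weight vector.

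The main obstacle will be step three: guaranteeing that rational points on the trivial-parabolic-Chern-class locus are dense near the given real point $\bfa$. This is where one must exploit the specific structure of the parabolic Chern class formulas for locally abelian bundles---namely that, with $\diae$ and the underlying filtered sheaves fixed, the dependence on weights is sufficiently tame (linear for $\pc_1$, governed by integer intersection data for higher $\pc_k$) that the trivial-Chern-class locus is a smooth rational affine subvariety near $\bfa$, along whose rational tangent directions we can correct a crude rational approximation. This is precisely the content of \cite[Lemma 3.3, Lemma 6.1]{AHL19}, and my plan is to reproduce their explicit calculation in the present setting rather than attempt a softer abstract density argument, which in general would fail.
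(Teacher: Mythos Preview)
The paper does not give its own proof of this proposition: it is stated with a citation to \cite[Lemma 3.3, Lemma 6.1]{AHL19}, followed only by a short remark noting that polystability of the perturbed bundle is extracted from the proof of \cite[Lemma 6.1]{AHL19} and that the $\R$-indexed and $\R^s$-indexed conventions agree under the locally abelian hypothesis. So there is nothing in the paper to compare your argument against beyond that remark.

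Your outline is consistent with the cited source and with the remark: you reduce to stable summands (which is how \cite[Lemma 6.1]{AHL19} obtains polystability after perturbation), you use the locally abelian hypothesis to make the parabolic Chern classes polynomial in the weights with rational coefficients (this is \cite[Lemma 3.3]{AHL19}), and you treat stability as an open condition. The one place where your sketch is softer than the actual argument is the density of rational points on the trivial-Chern-class locus; as you yourself flag, an abstract density argument does not work in general, and the proof in \cite{AHL19} proceeds by an explicit computation exploiting the specific form of the parabolic Chern class formulas. Since you state that your plan is to reproduce that calculation rather than rely on a soft argument, your proposal is in line with what the paper invokes.
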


\begin{remark}
The polystability of $(E, \baa E, \theta)$ follows from the proof of \cite[Lemma 6.1]{AHL19}. In \cite{AHL19}, the parabolic Higgs bundles are $\R$-indexed. Under the ``locally abelian'' assumption, they are equivalent to $\R^s$-indexed parabolic Higgs bundles in Definition \ref{df:para-Higgs} (\cite[Page 3]{AHL19}).
\end{remark}

\subsection{Semi-negativity}\label{sect:Semineg}

As is described in Section \ref{sect:PHBCVHS}, one can construct a Higgs bundle $(E=\oplus_iE^{i, w-i}, \theta)$ over $U$ for a given $\C$-PVHS $(V, \mathcal{D}, k_V)$ of weight $w$. Moreover, if we assume the local monodromy of $(V, \mathcal{D}, k_V)$ around each component of $D$ is unipotent, then the extended parabolic Higgs bundle $(E, \ba E, \theta)$ has the trivial parabolic structure, i.e., $\diae={}_{\bm{0}}E$ is the canonical Deligne extension and the only weight vector in $[0,1)^s$ is $\bm{0}$. In this case, the holomorphic subbundles $F^i\mathcal{V}$ and hence $E^{i, w-i}$ are canonically extended across $D$ by Schmid \cite{Sch73}, and $\theta$ has at most logarithmic poles along $D$. Denote the extended Higgs bundle by $(\diae=\oplus_i \diae^{i, w-i}, \theta)$ with $\theta\colon \diae^{i, w-i}\to \Omega_X^1(\log D)\otimes \diae^{i-1, w-i+1}$.  Recall the following semi-negativity theorem for the above Higgs bundle due to Zuo \cite[Theorem 1.2]{Zuo00}, which is a key ingredient in the proof of our main theorem.
\begin{theorem}[Zuo]\label{Thm:semi-neg}
Let $(\diae=\oplus_i \diae^{i, w-i}, \theta)$ be the extended Higgs bundle on $(X, D)$ coming from a $\C$-PVHS over $U$ with unipotent local monodromy around $D$. Suppose $N\subset \diae^{i, w-i}$ is a subbundle with $\theta(N)=0$.  Then for any morphism $f\colon Z\to X$, \[\int_Zf^{\ast}(c_1^{i_1}(N^{\vee})\ldots c_n^{i_n}(N^{\vee}))\geq 0,\] where $N^{\vee}$ is the dual bundle of $N$. In particular, \[\int_Zf^{\ast}\overset{\dim Z}{\wedge}(-c_1(N))\geq 0.\]
\end{theorem}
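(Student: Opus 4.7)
The plan is to equip $N$ with the Hodge metric induced from the polarization of the $\C$-PVHS, use the harmonic-bundle curvature identity to exhibit $(N,h|_N)$ as Griffiths semi-negative on $U$, extend this semi-negativity across $D$ using the unipotent monodromy hypothesis, and then invoke Fulton--Lazarsfeld-type positivity for Griffiths semi-positive bundles. More concretely, after the standard sign flip on odd Hodge pieces the polarization produces a positive definite Hodge metric $h$ on $E=\oplus_i E^{i,w-i}$, and the harmonic-bundle identity gives the Chern curvature
\[R_h=-(\theta\wedge\theta^{*}+\theta^{*}\wedge\theta).\]
For a local holomorphic section $s$ of $N\subset E^{i,w-i}$ and a $(1,0)$-tangent vector $v$, the hypothesis $\theta(N)=0$ kills the $\theta^{*}\wedge\theta$ summand applied to $s$, leaving $\langle R_h(v,\bar v)s,s\rangle_h=-\|\theta^{*}_{\bar v}s\|_h^{2}\le 0$. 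Combined with the Gauss equation for $N\hookrightarrow E^{i,w-i}$ (whose second fundamental form only further lowers the Griffiths curvature of the subbundle), this forces $(N,h|_N)$ to be Griffiths semi-negative on $U$, equivalently $(N^{\vee},h^{\vee})$ to be Griffiths semi-positive.

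To pass from $U$ to $X$, the unipotent monodromy assumption is essential. By Schmid's nilpotent orbit theorem together with his norm estimates, the Hodge metric on $E^{i,w-i}$ extends to a singular Hermitian metric on the canonical Deligne extension $\diae^{i,w-i}$ with at most logarithmic degeneracy along $D$, whose Chern curvature current genuinely represents the Chern classes of $\diae^{i,w-i}$. Restricting to (the saturation of) $N$, the Griffiths semi-negativity established on $U$ extends as a current statement on $X$, so $N^{\vee}$ acquires a singular Griffiths semi-positive Hermitian metric on all of $X$.

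To finish, I would invoke the Fulton--Lazarsfeld / Bloch--Gieseker / Demailly positivity theorem for Griffiths semi-positive bundles: every monomial $c_1^{i_1}(N^{\vee})\cdots c_n^{i_n}(N^{\vee})$ is represented by a closed positive current (or, in the algebraic setting, a numerically effective class), and therefore evaluates non-negatively against any effective cycle. Since Griffiths semi-positivity is preserved by pullback, $f^{*}N^{\vee}$ on $Z$ carries the same positivity property, yielding
\[\int_Z f^{*}\bigl(c_1^{i_1}(N^{\vee})\cdots c_n^{i_n}(N^{\vee})\bigr)\ge 0.\]
The ``in particular'' statement $\int_Z f^{*}\overset{\dim Z}{\wedge}(-c_1(N))\ge 0$ is just the top self-intersection of $c_1(N^{\vee})$.

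The main obstacle is the second step: verifying rigorously that the singular extension of $h$ across $D$ produces Chern forms that (i) define the Chern classes of the Deligne extension and (ii) preserve Griffiths semi-negativity of $N$ as a current on $X$. The unipotent monodromy hypothesis is exactly what enables the Cattani--Kaplan--Schmid and Mochizuki asymptotic analysis of the Hodge norm to give the required $L^{1}$-control of the curvature current near $D$; without it one would need to introduce parabolic weights and the statement would require a parabolic Chern class version.
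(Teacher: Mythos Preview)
The paper does not prove this theorem: it is quoted verbatim from Zuo \cite[Theorem~1.2]{Zuo00} and used as a black box in Step~3 of the proof of Theorem~\ref{Thm:main}. So there is no in-paper argument to compare against.

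That said, your outline is essentially Zuo's original proof in \cite{Zuo00}. The curvature identity $R_h=-(\theta\theta^{*}+\theta^{*}\theta)$ for the harmonic bundle, together with $\theta|_N=0$ and the curvature-decreasing property of holomorphic subbundles, gives Griffiths semi-negativity of $(N,h|_N)$ on $U$; and monomials $c_1^{i_1}\cdots c_n^{i_n}$ lie in the Fulton--Lazarsfeld positive cone because each $c_i$ is a Schur polynomial and products of Schur polynomials expand non-negatively by Littlewood--Richardson.

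Two points in your extension step deserve more care. First, the second fundamental form of $N\subset\diae^{i,w-i}$ may blow up near $D$, so ``ambient curvature is $L^1$ plus curvature decreases in subbundles'' is not by itself enough; one needs the Schmid/CKS norm estimates applied to the induced metric on $N$ (equivalently on $\mathcal{O}_{\mathbb{P}(N)}(1)$) to see that it has at worst Poincar\'e growth, so that its curvature current carries no mass on $D$ and computes the Chern class of the Deligne extension. Second, ``Griffiths semi-positive singular metric $\Rightarrow$ Fulton--Lazarsfeld positivity'' is not a statement one can cite directly; the clean route (and the one Zuo takes) is to first deduce that $N^{\vee}$ is nef by checking $\mathcal{O}_{\mathbb{P}(N^{\vee})}(1)\cdot C\ge 0$ curve by curve via the singular metric, and then invoke Fulton--Lazarsfeld/Demailly--Peternell--Schneider for nef bundles. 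Your sketch conflates these two steps, but the underlying strategy is correct.
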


\subsection{Logarithmic 1-forms and spectral coverings of unbounded $p$-adic representation}\label{sect:p-adicrep}

In this subsection we recall logarithmic 1-forms and spectral coverings associated to unbounded $p$-adic representations of fundamental groups of quasiprojective varieties from the celebrated work due to Jost and Zuo \cite{JZ97} (see also \cite[Section 4.1.4]{Zuo99} and \cite[Part 5]{Moc07b}). 

For a nonrigid semisimple representation $\rho\colon \pi_1(U)\to \SL(m, \C)$, one can get an unbounded $p$-adic representation (due to Simpson \cite{Simp92}). More specifically, there is an affine curve $A\subset \Hom(\pi_1(U), \SL(m, \C))$ passing through $\rho$ such that $A$ maps to an affine curve in $\MB(X, m)$
. One can complete the affine curve $A$ with points at infinity and desingularize it along points at infinity. Denote the resulting complete curve by $\overline{A}$. Choosing a smooth point $\infty$ at infinity, one has the $\infty$-adic valuation $\nu_{\infty}(\bullet)$ on function field $k(A)$ of $A$, where $\nu_{\infty}(f)$ is the vanishing order of $f$ at $\infty$. Then we get the completion  $k(A)_{\infty}$ of $k(A)$ with respect to the the $\infty$-adic norm $|f|\coloneqq c^{-\nu_{\infty}(f)}$ ($c$ is a fixed real number and $c>1$). The representation $\rho$ induces a new representation $$\rho_{A}\colon \pi_1(U)\to \SL(m, k(A)_{\infty}),$$ with $\rho_A|_{\rho}=\rho$. Also, notice that  $\rho_A$ are unbounded in the sense that the entries of matrices of $\im \rho_A$ is unbounded in $\SL(m, k(A)_{\infty})$ with respect to the above $\infty$-adic norm (see e.g., \cite[Lemma 2.2.3]{Zuo99}). With the $\infty$-adic unbounded representation $\rho_A$, by \cite[Theorem 1.1]{JZ97} we have a $\rho_A$-equivariant nonconstant pluriharmonic map $u_{\rho}\colon \widetilde{U}\to \Delta(\SL(m, k(A)_{\infty})$ from the universal cover of $U$ to the Bruhat-Tits building $\Delta(\SL(m, k(A)_{\infty})$ (refer to \cite{BT72} and \cite{GS92} for the construction of $\Delta(\SL(m, k(A)_{\infty})$ and $u_{\rho}$ for projective case). 

The complexified differential of $u_{\rho}$ gives a nontrivial $\pi_1(U)$ multi-valued holomorphic 1-form on $\tilde{U}$, which descends to a nontrivial multi-valued holomorphic 1-form $\omega$ on $U$. By the controlled estimate of $du_{\rho}$ at the boundary divisor $D$, $\omega$ can be extended to a multi-valued logarithmic 1-form on $X$ with at most logarithmic poles along $D$. 
We still denote this multi-valued logarithmic 1-form on $X$ by $\omega$. Then there is a finite ramified Galois covering (\textsl{spectral covering}) $\pi\colon X^s\to X$ from a normal projective variety $X^s$ (not necessarily smooth) to $X$ such that the pullback $\pi^*\omega$ splits into nontrivial  1-forms $\omega_1, \dots, \omega_l$ in $H^0(X^s, \pi^*\Omega_{X}^1(\log D_{\infty}))$ which have log poles at infinity. Also, the ramified loci of $\pi$ is contained in the union of the zero loci of 1-forms $\{\omega_i-\omega_j\}_{1\leq i<j\leq l}$ (see e.g., \cite[Section 3]{JZ97} and \cite[Section 4.1.4]{Zuo99} for more details). 

On the other hand, if $\rho\colon \pi_1(U)\to \SL(m, \C)$ is a rigid semisimple representation, $\rho$ is valued in some number field $K$ since $M_{\text{B}}(U, m)$ is defined over $\Z$. For the ring of integers $\mathcal{O}_K$ in $K$, assume that $[\rho(\pi_1(U))\colon \rho(\pi_1(U))\cap \SL(m, \mathcal{O}_K)]=\infty$, one can find a prime ideal $\mathfrak{p}\in \mathcal{O}_K$ such that the induced representation $\rho_{\mathfrak{p}}\colon \pi_1(U)\to \SL(m, K_{\mathfrak{p}})$ is a $\mathfrak{p}$-adic unbounded representation with respect to the $\mathfrak{p}$-adic norm on the number field $K_{\mathfrak{p}}$ (see e.g., \cite[Page 122]{Zuo99}). Then by \cite[Theorem 1.1]{JZ97} we have a $\rho_{\mathfrak{p}}$-equivariant nonconstant pluriharmonic map $u_{\rho}\colon \widetilde{U}\to \Delta(\SL(m, K_{\mathfrak{p}})$. Similarly, we get spectral covers $X^s$ of $X$ and logarithmic 1-forms on $X^s$ as before.

\section{$\pi_1$-small Divisors on Quasi-projective Varieties}

In this section we will study $\pi_1$-small divisors on quasi-projective varieties and prove Theorem \ref{Thm:main}. We will essentially follow the strategy of the proof of Theorem \ref{Thm:L-R} (see \cite{LR96} and \cite{Zuo99}) for projective surfaces, and use deformation and perturbation methods (\cite[Section 10.1]{Moc06} and \cite[Section 2]{AHL19}), and the covering trick due to Kawamata to deal with issues raised from the boundary.  

We fix the following notations throughout this section. Let $X$ be a complex smooth projective variety of dimension $n\geq2$, $D=\sum_{k=1}^sD_k$ be a simple normal crossing divisor, and  $Y=\sum_{i=1}^r n_iY_i$ be a divisor on $X$ such that $D$ and $Y$ share no common component. Denote the quasiprojective variety $X-D$ by $U$, and the singular set of $D$ by $\Sing D$. We also denote the restriction of $Y_i$ on $U$ by $Y^o_i$ for each $i$, and $Y^o\coloneqq\sum_{i=1}^r n_iY_i^o$. For a nontrivial logarithmic 1-form $\eta\in H^0(X, \Omega^1_X(\log D))$, we say that \textsl{$\eta$ restricts to 0 on $Y$} and write that $\eta|_Y=0$, if the restricted logarithmic 1-form of $\eta$ to the smooth locus of $Y_i$ is zero for all $i$.

\subsection{Logarithmic 1-forms and quasi-Albanese maps} 

Firstly, we recall the construction of quasi-Albanese map of $U= X-D$  from Iitaka \cite{Ii76}.
Pick holomorphic 1-forms $\{\omega_1,\ldots\omega_q\}$ in $H^0(X, \Omega_X^1)$ and logarithmic 1-forms $\{\eta_1, \ldots, \eta_m\}$ in $H^0(X, \Omega_X^1(\log D))$ such that $\{\omega_1, \ldots, \omega_q, \eta_1, \ldots, \eta_m\}$ form a basis of $H^0(X, \Omega_X^1(\log D))$. Dually one can pick a basis $\{\alpha_1, \ldots, \alpha_{2q}\}$ for $H_1(X, \mathbb{Z})$ and a basis $\{\beta_1, \ldots, \beta_m\}$ for $ \ker\{ H_1(U, \mathbb{Z})\to H_1(X, \mathbb{Z})\}.$ With respect to the dual basis $\omega_i^*, \eta_j^*$ in $H^0(X, \Omega_X^1(\log D))^{\vee}$, we have the following periods
\[\Lambda\coloneqq\sum_{i=1}^{2q}\mathbb{Z}\left(\int_{\alpha_i}\omega_1, \ldots, \int_{\alpha_i}\eta_m\right)+\sum_{j=1}^m\mathbb{Z}\left(\int_{\beta_j}\omega_1, \ldots, \int_{\beta_j}\eta_m\right).\]
The quasi-Albanese variety is defined to be $A_U =\displaystyle \frac{H^0(X,\Omega_X^1(\log D))^{\vee}}{\Lambda}$ and the quasi-Albanese map $\alpha_U\colon U \to A_U$ is given by 
\[\alpha_U(x)=\left[\sum_{i=1}^{q}\left(\int_{p}^x\omega_i\right)\omega^*_i+\sum_{j=1}^m\left(\int_{p}^x\eta_j\right)\eta_j^*\right]\slash\Lambda,\]
 where $p\in U$ is a chosen base-point in $U$.

\begin{lemma}\label{lem:quasiprospurr}
Let $X$ be a smooth projective surface with a simple normal crossing divisor $D=\sum_{i=1}^sD_i$ and a divisor $Y=\sum_{i=1}^rn_iY_i$ such that $Y$ intersects mildly with respect to $D$. 
Suppose that there is a nontrivial logarithmic 1-form $\eta\in H^0(X, \Omega_X^1(\log D))$ such that $\eta|_Y=0$, then $Y^2\leq 0$. 
\end{lemma}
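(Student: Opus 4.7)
The plan is to use the quasi-Albanese map associated to $\eta$, a Stein factorization, and Zariski's lemma on a suitable blow-up of $X$.

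First, every global logarithmic $1$-form on a smooth projective variety with simple normal crossing boundary is $d$-closed (Deligne), so $\eta$ integrates to a morphism $\alpha_\eta\colon U\to E_\eta$; concretely, one composes the quasi-Albanese map $\alpha_U\colon U\to A_U$ recalled in the previous subsection with the projection $A_U\to E_\eta$ dual to the line $\C\!\cdot\!\eta\subseteq H^0(X,\Omega_X^1(\log D))$. Here $E_\eta$ is a $1$-dimensional semi-abelian variety ($\C$, $\C^*$, or an elliptic curve), and $\alpha_\eta^*\widetilde\eta=\eta$ for a nowhere-vanishing invariant $1$-form $\widetilde\eta$ on $E_\eta$. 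The hypothesis $\eta|_{Y_i}=0$ then forces $d\alpha_\eta$ to vanish tangentially to the smooth locus of each $Y_i$, and since $Y_i$ is irreducible with $Y_i\cap D$ finite, $Y_i^o$ is connected, so $\alpha_\eta$ contracts each $Y_i^o$ to a single point of $E_\eta$.

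The second step is to extend $\alpha_\eta$ to a rational map $\bar\alpha_\eta\colon X\dashrightarrow\bar E_\eta$ and to localize its indeterminacy inside $\Sing D$. Locally around a smooth point of $D$ on a single component $D_k$, one can write $\eta=r_k\,\frac{dz_k}{z_k}+(\text{holomorphic})$ with constant residue $r_k$, so the exponentiated integral behaves like $z_k^{r_k}\cdot(\text{unit})$ and has a well-defined limit in $\bar E_\eta$. Only at a node $D_p\cap D_q$ with $p\neq q$ can the limit be path-dependent, because the residues on the two branches then interact. This local extension analysis along the boundary is the step where I expect the main technical difficulty to lie. The mildness hypothesis, which for $n=2$ reads $Y_i\cap D_p\cap D_q=\emptyset$ whenever $p\neq q$, is exactly what forces $Y_i\cap\Sing D=\emptyset$, so the indeterminacy locus of $\bar\alpha_\eta$ is disjoint from $Y$.

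Finally, I would resolve the indeterminacies by iteratively blowing up (proper transforms of) points of $\Sing D$ to obtain $\pi\colon X'\to X$ and a morphism $\widetilde\alpha_\eta\colon X'\to\bar E_\eta$. Because every blow-up center lies off $Y$, we have $\pi^*Y_i=\widetilde Y_i$ with no exceptional correction, whence $\pi^*Y=\widetilde Y:=\sum_i n_i\widetilde Y_i$ and $(\pi^*Y)^2=Y^2$ by the projection formula. The Stein factorization $\widetilde\alpha_\eta=g\circ h$, with $h\colon X'\to C$ surjective with connected fibers onto a smooth projective curve $C$ and $g\colon C\to\bar E_\eta$ finite, places each connected $\widetilde Y_i$ in a single fiber of $h$, since $\widetilde\alpha_\eta$ contracts it and $g$ is finite. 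Thus $\pi^*Y=\widetilde Y$ is supported on a union of fibers of $h$, and Zariski's lemma applied to the fibration $h\colon X'\to C$ on the smooth projective surface $X'$ yields $(\pi^*Y)^2\leq 0$, hence $Y^2\leq 0$.
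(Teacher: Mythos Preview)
The gap is in your first step. The ``projection $A_U\to E_\eta$ dual to the line $\C\cdot\eta$'' is the linear quotient $H^0(X,\Omega_X^1(\log D))^\vee\to\C$, $f\mapsto f(\eta)$, and after dividing by the image of the period lattice you get $E_\eta=\C/\Gamma_\eta$ with $\Gamma_\eta=\{\int_\gamma\eta:\gamma\in H_1(U,\Z)\}$. There is no reason for $\Gamma_\eta$ to be discrete: already for a single holomorphic $1$-form on a smooth projective curve of genus $\ge 2$ the periods generically generate a dense subgroup of $\C$. So $E_\eta$ need not be Hausdorff, let alone a semi-abelian variety (and $\C=\mathbb{G}_a$ is never one). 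Your map $\alpha_\eta$ is therefore not defined in general, and everything downstream collapses.

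The paper avoids this by quotienting not by a linear hyperplane but by a genuine sub-semi-abelian variety: set $B=\langle\alpha_U(\bigcup_i Y_i^o)\rangle\subset A_U$. Since $\eta|_Y=0$, the invariant form on $A_U$ corresponding to $\eta$ vanishes along $B$ and hence descends to $A_U/B$, forcing $\dim(A_U/B)\ge 1$; but nothing prevents $\dim(A_U/B)\ge 2$. One compactifies $A_U/B$ to a $(\mathbb{P}^1)^l$-bundle over an abelian variety, checks (exactly as in your Step~2, and this is indeed where the mildness hypothesis enters) that the extended rational map is regular off $\Sing D$, and blows up away from $Y$ to obtain a morphism $\varphi\colon X'\to\overline{A_U/B}$ contracting $Y$. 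Then two cases arise: if $\dim\varphi(X')=1$, your Stein factorization and Zariski's lemma finish the job; if $\dim\varphi(X')=2$, one instead uses that the exceptional locus of a generically finite morphism of surfaces has negative definite intersection form. Your proposal handles the first case correctly but misses the second entirely, and the attempt to force a one-dimensional target from the outset is precisely where it breaks.
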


\begin{proof}
Without loss of generality, we can assume the support $\bigcup_{i=1}^rY_i$ of $Y$ is connected. Consider the quasi-Albanese morphism $\alpha_U\colon U\to A_U$, where $A_U$ is an extension \[\xymatrix{
1\ar[r] & {\C^*}^{m}\ar[r] &A_U\ar[r]^-{\Phi} &A_X\ar[r] &1},\] and $\alpha_X\colon X\to A_X$ is the Albanese map of $X$. Let $B\coloneqq \langle \alpha_U(\bigcup_{i=1}^rY_i^o)\rangle$ be the subtori generated by $\alpha_U(\bigcup_{i=1}^rY_i^o)$ in $A_U$, i.e., the intersection of semi-abelian subvarieties such that some translates of them contain $\alpha_U(\bigcup_{i=1}^rY_i^o)$ (Here $Y^o_i\coloneqq Y_i\cap U$). Then we get the morphism $\alpha\colon U\to A_U/B$ induced by $\alpha_U$, with $A_U/B$ being an extension \[\xymatrix{
1\ar[r] & {\C^*}^{l}\ar[r] &A_U/B\ar[r]^-{\Psi} &A\ar[r] &1,}\] where $A$ is the quotient of $A_X$ by $\Phi(B)$. Denote the quotient map by $p\colon A_X\to A$.  Since there is a nontrivial logarithmic 1-form $\eta$ such that $\eta|_Y=0$, $A_U/B$ is nontrivial. We denote the space of logarithmic 1-forms on $A_U/B$ by $L(A_U/B)$. Notice that $A_U/B$ admits a natural compactification $\overline{A_U/B}$, such that there is a projection $\pi\colon \overline{A_U/B}\to A$ extending $\Psi$ and making $\overline{A_U/B}$ a $(\mathbb{P}^1)^l$-bundle over $A$ (see e.g., \cite[Section 2]{NW02} for the construction). Then the morphism $\alpha$ extends to a rational map $\overline{\alpha}\colon X\dashrightarrow \overline{A_U/B}$. 

Claim that $\overline{\alpha}$ is regular outside of $\Sing D$. We follow the idea in \cite[Lemma 2.4]{NW02} to prove the claim. For each point $x_0\in D\backslash \Sing D$, there is an analytic open neighborhood $W$ of $p\circ \alpha_X(x_0)\in A$ such that $\pi^{-1}(W)=(\mathbb{P}^1)^l\times W$. Taking an analytic local neighborhood $\Delta_{x_0}$ of $x_0$ in $X$, we can write \[\alpha(x)=(f_1(x), \ldots, f_l(x), g(x)),\] on $\Delta_{x_0}^o\coloneqq\Delta_{x_0}\cap U$, where $g$ is a holomorphic map from $\Delta_{x_0}$ to $W$, and $$f_i(x)\coloneqq \exp(\int_a^x\theta_i)$$ with a base point $a\in\Delta_{x_0}^o$ and a set of logarithmic 1-forms $\{\theta_1, \ldots, \theta_l\}$  in $L(A_U/B)\subset H^0(X, \Omega_X^1(\log D))$ such that their images in the quotient space $L(A_U/B)/H^0(A, \Omega_A^1)$ form a basis (Recall that $L(A_U/B)$ denotes the space of logarithmic 1-forms on $A_U/B$). Denote a small cycle around the boundary component $D_k$ containing $x_0$ by $\delta_k$. Then one can extend $\alpha$ across $x_0$ by defining that
 \[
    f_i(x_0)= \left\{\begin{array}{lr}
        0, & \text{if } \int_{\delta_k}\theta_i>0;\\
       \infty, & \text{if } \int_{\delta_k}\theta_i<0;\\
       \exp(\int_a^{x_0}\theta_i)\in\C^*, & \text{if } \int_{\delta_k}\theta_i=0.
        \end{array}\right.
  \]

Notice that since $Y$ intersects mildly with respect to $D$, $Y\cap \Sing D=\emptyset$. Then $\overline{\alpha}$ is regular along $Y$ and contracts $Y$. After blowing up $X$ without affecting $Y$, we have a morphism $\varphi\colon X'\to \overline{A_U/B}$ which contracts $Y$. When $\dim \varphi(X')=1$, we get $Y^2\leq 0$ since the intersection matrix of components of any fibre of $\varphi$ is negative semi-definite. If $\dim \varphi(X')=2$, we get $Y^2<0$ by \cite[Theorem 10.1]{KK13}. Hence $Y^2\leq 0$ on $X$.
\end{proof}

\begin{corollary}\label{cor:trivialh1}
Let $X$, $D$, $Y$, and $U\coloneqq X-D$ be the same notations as in Lemma \ref{lem:quasiprospurr}. Suppose that 
\begin{enumerate}
\item the composition \textup{$\pi_1(\widehat{Y}^o_i)\to \pi_1(U)\to \pe(U)$} is trivial for all $i$, where $\widehat{Y}^o_i$ is any desingularization of $Y^o_i$, and \textup{$\pe(U)$} is the \'etale fundamental group of $U$;
\item $Y^2>0$. 
\end{enumerate}
Then $H^1(U, \C)=0$.
\end{corollary}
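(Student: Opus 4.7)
The plan is to combine the previous Lemma \ref{lem:quasiprospurr} with hypothesis (1). Since $Y^2>0$, the contrapositive of Lemma \ref{lem:quasiprospurr} says no nonzero $\eta\in H^0(X,\Omega_X^1(\log D))$ can restrict to zero on $Y$. So it suffices to show that \emph{every} such $\eta$ does restrict to zero on $Y$, forcing $H^0(X,\Omega_X^1(\log D))=0$, and then to conclude $H^1(U,\mathbb{C})=0$ via Deligne's Hodge decomposition.

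First, translate hypothesis (1) into a vanishing in integral first homology. By functoriality of abelianization and profinite completion, the triviality of the composition $\pi_1(\widehat{Y}^o_i)\to\pi_1(U)\to\pe(U)$ implies that the image of $H_1(\widehat{Y}^o_i,\mathbb{Z})\to\widehat{H_1(U,\mathbb{Z})}$ is zero. Since $U$ is quasiprojective, $H_1(U,\mathbb{Z})$ is finitely generated abelian, and such a group injects into its profinite completion; consequently the image of $H_1(\widehat{Y}^o_i,\mathbb{Z})\to H_1(U,\mathbb{Z})$ is itself zero for every $i$. Now fix $\eta\in H^0(X,\Omega_X^1(\log D))$ and consider the pullback $\tilde\eta$ along the composition $f\colon\widehat{Y}_i\to Y_i\hookrightarrow X$. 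Because $D$ and $Y$ share no common component and $Y$ intersects $D$ mildly (so $Y_i\cap\Sing D=\emptyset$), $\tilde\eta$ is a logarithmic 1-form on the smooth projective curve $\widehat{Y}_i$ with poles only along the finite set $P_i:=f^{-1}(Y_i\cap D)$, and $\widehat{Y}^o_i=\widehat{Y}_i\setminus P_i$. Each period $\int_\gamma\tilde\eta=\int_{f_*\gamma}\eta$ for $\gamma\in H_1(\widehat{Y}^o_i,\mathbb{Z})$ vanishes, because $f_*\gamma=0$ in $H_1(U,\mathbb{Z})$ by the preceding paragraph. In particular, the residues of $\tilde\eta$ at points of $P_i$ (which are periods around small loops) vanish, so $\tilde\eta$ extends to a holomorphic 1-form on the compact Riemann surface $\widehat{Y}_i$ whose periods on $H_1(\widehat{Y}_i,\mathbb{Z})$ all vanish. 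Any such 1-form is identically zero, so $\eta$ restricts to zero on the smooth locus of each $Y_i$.

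Combined with Lemma \ref{lem:quasiprospurr} and the hypothesis $Y^2>0$, this forces $\eta=0$, so $H^0(X,\Omega_X^1(\log D))=0$. By Deligne's mixed Hodge theory, $H^1(U,\mathbb{C})\cong H^0(X,\Omega_X^1(\log D))\oplus H^1(X,\mathcal{O}_X)$, and the second summand equals $\overline{H^0(X,\Omega_X^1)}\subseteq\overline{H^0(X,\Omega_X^1(\log D))}=0$. Hence $H^1(U,\mathbb{C})=0$. The only substantive subtlety is the first step — passing from profinite triviality to triviality in the integral homology $H_1(U,\mathbb{Z})$, which relies essentially on the finite generation of $H_1(U,\mathbb{Z})$ for a quasiprojective variety; the remainder is a direct application of Lemma \ref{lem:quasiprospurr}, residue calculus, and the injectivity of the period map on a compact Riemann surface.
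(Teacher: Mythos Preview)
Your argument is correct and follows the same route as the paper: use hypothesis (1) to force every logarithmic $1$-form to restrict to zero on $Y$, then invoke Lemma \ref{lem:quasiprospurr} to contradict $Y^2>0$. The paper compresses all of this into a single sentence, whereas you have spelled out the key implication ``$\pi_1(\widehat{Y}^o_i)\to\pe(U)$ trivial $\Rightarrow$ $\eta|_{Y_i}=0$'' via the finite-generation/profinite-injectivity step, residue vanishing, and the period map on $\widehat{Y}_i$---so your version is a fully detailed execution of the same proof.
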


\begin{proof}
Assume that $H^1(U, \C)\neq0$, then there exists a nontrivial logarithmic 1-form $\eta\in H^0(X, \Omega_X^1(\log D))$ such that $\eta|_Y=0$, since $\pi_1(\widehat{Y}^o_i)\to \pi_1(U)\to\pe(U)$ is trivial for all $i$.  However, this contradicts to Lemma \ref{lem:quasiprospurr}.
\end{proof}

\subsection{Finiteness of Betti moduli}

In this subsection we show the following finiteness property of the Betti moduli space, which is a generalization of Zuo \cite[Lemma 5.4.6]{Zuo99}.

\begin{proposition}\label{prop:finitebetti}
Let $X$ be a smooth projective variety with a normal crossing divisor $D=\sum_{i=1}^sD_i$ and a divisor $Y=\sum_{i=1}^rn_iY_i$. For $U\coloneqq X-D$ and $Y^o\coloneqq Y|_U$,  suppose that 
\begin{enumerate}
\item \label{prop-finite:1}the composition \textup{$\pi_1(\widehat{Y}^o_i)\to \pi_1(U)\to \pe(U)$} is trivial for all $i$, where $\widehat{Y}^o_i$ is a desingularization of $Y^o_i$, and \textup{$\pe(U)$} is the \'etale fundamental group of $U$,

\item \label{prop-finite:2} $Y$ intersects mildly with respect to $D$, and

\item \label{prop-finite:3} there exists an ample divisor $H$ on $X$, such that $Y^2\cdot H^{n-2}>0$. 
\end{enumerate}
Then \textup{$\dim \MB(U, m)=0$} for each $m$. Moreover, any semisimple representation of each conjugacy class in \textup{$\MB(U, m)$} is valued in some algebraic integers $\mathcal{O}_K$ after passing to a finite \'etale cover of $U$.
\end{proposition}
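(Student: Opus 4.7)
The plan is to argue by contradiction: assume $\dim \MB(U,m)\geq 1$ and derive a contradiction with $Y^2\cdot H^{n-2}>0$. First I would pick a semisimple $\rho\colon \pi_1(U)\to \SL(m,\mathbb{C})$ lying on a positive-dimensional component of $\MB(U,m)$ and use Simpson's procedure recalled in Section~\ref{sect:p-adicrep} to produce an affine curve $A\subset \Hom(\pi_1(U),\SL(m,\mathbb{C}))$ through $[\rho]$, together with an unbounded $\infty$-adic representation $\rho_A\colon\pi_1(U)\to \SL(m,k(A)_\infty)$. The Jost--Zuo theorem then supplies a $\rho_A$-equivariant nonconstant pluriharmonic map $u_\rho\colon \widetilde U\to \Delta(\SL(m,k(A)_\infty))$, whose complexified differential defines a nontrivial multi-valued holomorphic 1-form on $U$ that extends to a multi-valued section of $\Omega_X^1(\log D)$. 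Passing to a finite Galois spectral covering $\pi\colon X^s\to X$ (which I would replace by a desingularisation) makes this pullback split into nontrivial single-valued 1-forms $\omega_1,\dots,\omega_l\in H^0(X^s,\pi^*\Omega_X^1(\log D))$.

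The key step is to show that each $\omega_j$ restricts to zero on every component of the preimage of each $Y_i$ in $X^s$. Assumption~(\ref{prop-finite:1}) says $\pi_1(\widehat{Y_i^o})\to \pe(U)$ is trivial, so the image $H_i$ of $\pi_1(\widehat{Y_i^o})$ in $\pi_1(U)$ lies in every finite-index normal subgroup of $\pi_1(U)$. Since $\rho_A(\pi_1(U))$ is a finitely generated linear group inside $\SL(m,k(A)_\infty)$, Malcev's theorem yields residual finiteness, and hence $\rho_A(H_i)=1$. Consequently $u_\rho$ is $H_i$-invariant on each connected component of the preimage of $Y_i^o$ in $\widetilde U$ and descends to a pluriharmonic map $\widehat{Y_i^o}\to \Delta$ equivariant under the trivial representation. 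I would then appeal to the branch/\'etale dichotomy pointed out in the introduction: either $Y_i$ is contained in the branch locus of $\pi$, or $\pi$ is \'etale over $Y_i^o$, in which case the triviality of $\pi_1(\widehat{Y_i^o})\to \pe(U)$ forces every connected component of $\pi^{-1}(Y_i^o)$ to map birationally onto $Y_i^o$. In both cases the construction of $\omega_j$ from $du_\rho$ forces $\omega_j$ to vanish on (the proper transform of) every lift of $Y_i$.

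To exploit this vanishing I would reduce to a surface via Bertini. Choosing a very general complete intersection surface $S\subset X$ cut out by $n-2$ members of $|kH|$ for $k\gg 0$, the restrictions $D|_S$ and $Y|_S$ remain simple normal crossing and mild respectively, and $(Y|_S)^2=k^{n-2}\cdot Y^2\cdot H^{n-2}>0$. Pulling back to a desingularisation $\widetilde{S^s}$ of $\pi^{-1}(S)$, the restrictions $\omega_j|_{\widetilde{S^s}}$ are nontrivial logarithmic 1-forms vanishing on the preimage of $Y|_S$ by the key step. Applying Lemma~\ref{lem:quasiprospurr} on $\widetilde{S^s}$ yields that the self-intersection of the preimage of $Y|_S$ there is nonpositive; the projection formula under $\pi|_{\widetilde{S^s}}\colon \widetilde{S^s}\to S$ then forces $(Y|_S)^2\leq 0$, contradicting positivity. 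Hence $\dim \MB(U,m)=0$.

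For the moreover statement, isolation of each class together with $\MB(U,m)$ being defined over $\mathbb{Z}$ implies every semisimple $\rho$ is defined over some number field $K$; if $\rho$ were unbounded at a finite place $\mathfrak{p}$ of $K$, replacing $\rho_A$ by the $\mathfrak{p}$-adic unbounded representation $\rho_\mathfrak{p}\colon \pi_1(U)\to\SL(m,K_\mathfrak{p})$ in the argument above would yield the same contradiction. Thus $\rho$ is bounded at every finite place, and a standard invariant-lattice argument produces an $\mathcal{O}_K$-lattice stabilised by some finite-index subgroup of $\pi_1(U)$, which corresponds to a finite \'etale cover of $U$. The main obstacle I anticipate is the vanishing in the key step: one must carefully track the branch locus of the spectral cover across intersections of $Y$ with $D$ and across singularities of $Y$, and assumption~(\ref{prop-finite:2}) on mild intersection is precisely what makes this analysis tractable.
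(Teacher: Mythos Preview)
Your strategy coincides with the paper's: contradict $Y^2\cdot H^{n-2}>0$ by producing, via a Jost--Zuo spectral cover attached to an unbounded $p$-adic representation, a nontrivial logarithmic $1$-form vanishing on the lift of $Y$, and then invoke Lemma~\ref{lem:quasiprospurr}. Two points of divergence are worth flagging. First, the paper reverses your order of operations: it cuts down to a complete-intersection surface $T\subset X$ \emph{before} building the spectral cover, using $\pi_1(V)\cong\pi_1(U)$ from Lefschetz; this avoids handling a higher-dimensional spectral cover and a Bertini restriction simultaneously.

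Second, and more substantively, the obstacle is not quite where you locate it. The vanishing of the forms on the preimage of $C$ is indeed immediate from your Malcev argument (since $\rho_A|_{\widehat C^o_i}$ is trivial, $u_\rho$ is constant there). The genuine difficulty is verifying the \emph{mild intersection} hypothesis of Lemma~\ref{lem:quasiprospurr} on the resolved spectral cover $\widehat T^s$. The paper's dichotomy is per irreducible component $C_i^j$ of the \emph{scheme-theoretic} pullback $\pi^*C_i$, not per $Y_i$ as you phrase it: either $\pi$ is \'etale along $C_i^j$, or $\pi|_{C_i^j}$ is nonreduced. In the \'etale case, $C_i^j$ avoids the singular loci of $T^s$ and of $\pi^{-1}(B)$, so mild intersection of $\sigma^*E$ with $\widehat B$ follows. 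In the nonreduced case, however, $\sigma^*E$ may share components $\widehat B'$ with the boundary $\widehat B$; the paper then removes $\widehat B'$ from the log boundary and uses that the particular form $\eta_{p,q}$ attached to the relevant branch component actually lies in $H^0(\widehat T^s,\Omega^1_{\widehat T^s}(\log(\widehat B-\widehat B')))$, which requires \cite[2.3(c)]{EV92}. Without this step Lemma~\ref{lem:quasiprospurr} is inapplicable, and this is precisely where assumption~(\ref{prop-finite:2}) enters. Finally, your projection-formula conclusion should be replaced by the observation that some connected component $E$ of $(\pi\circ\sigma)^*C$ has $E^2>0$ (since $((\pi\circ\sigma)^*C)^2=\deg\pi\cdot C^2>0$), and one applies the lemma directly to that $E$.
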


\begin{proof} Without loss of generality, we can assume that $Y$ is effective and $H$ is very ample. Take a smooth complete intersection surface $T$ of $n-2$ general hyperplanes of the linear system $|H|$ such that $D|_T$ is a simple normal crossing divisor on $T$.
We denote

\begin{itemize}
\item $B_i\coloneqq D_i|_T$, $B\coloneqq \sum_{i=1}^sB_i$;
\item $V\coloneqq T-B$;
\item $C_i^o\coloneqq Y_i^o|_T=Y_i|_V$, which is irreducible by Bertini theorem;
\item $C\coloneqq Y|_T=\sum_{i=1}^{r} n_i C_i$, where $C_i\coloneqq {Y_i}|_T$;
\item $\widehat{C}^o_i$ are desingularizations of $C_i^o$.
\end{itemize}

By the Lefschetz hyperplane theorem for quasiprojective varieties (see \cite[Theorem 1.1.3]{HL85}), we have $\pi_1(V)\cong \pi_1(U)$.
By the assumption (\ref{prop-finite:1}), we have that homomorphisms \[\pi_1(\widehat{C}^o_i)\to\pi_1(V)\to \pe(V)\] is trivial. 
 Also, since $C^2=Y^2\cdot H^{n-2}>0$, and $C$ intersects mildly with respect to $B$ by assumption (\ref{prop-finite:2}), we have $H^1(V, \C)=0$ by Corollary \ref{cor:trivialh1}. Hence we can assume $\rho(\pi_1(V))\subset \SL(m,\C)$, since $1$-dimensional representations of $\pi_1(V)$ are finite. We may also assume that $C$ is connected for the rest of the proof.

Now we assume by contradiction that  $\dim \MB(U, m)=\dim \MB(V, m)\neq0$, then there exists a nonrigid semisimple representation $\rho\in \MB(V, m)$.  By the construction in Section \ref{sect:p-adicrep}, we have a pluriharmonic map $u_{\rho}\colon \widetilde{V}\to \Delta(\SL(m, k(A)_{\infty})$ from the universal cover of $V$ to the Bruhat-Tits building $\Delta(\SL(m, k(A)_{\infty})$ associated to an $\infty$-adic field $k(A)_{\infty}$ of an affine curve $A\subset \Hom(\pi_1(V), \SL(m, \C))$. Then we get the spectral covering $\pi\colon T^s\to T$ from a normal surface $T^s$ to $T$ together with $l$ holomorphic $1$-forms $\omega_1, \ldots, \omega_l\in H^0(T^s, \pi^*\Omega^1_{T}(\log D_{\infty}))$ with log poles at infinity (Here $D_{\infty}$ denotes divisors at infinity), and the ramified loci of $\pi$ is contained in the union of the zero loci of $\{\omega_p-\omega_q\}_{1\leq p<q\leq l}$.   
Take a logarithmic resolution $\sigma\colon \widehat{T}^{s}\to T^{s}$ such that the boundary divisor $\widehat{T}^{s}-(\pi\circ\sigma)^{-1}V$ is a simple normal crossing divisor, which we denote to be $\widehat{B}$.
For any $1\leq p<q\leq l$, via the morphism $$\sigma^*\circ\pi^*\Omega_{T}^1(\log D_{\infty})\to \Omega_{\widehat{T}^s}^1(\log \widehat{B}),$$ $\omega_p-\omega_q$ gives rise to a nontrivial holomorphic 1-form $\eta_{p,q}$ on $(\pi\circ\sigma)^{-1}V$ with logarithmic poles along boundary divisors, i.e., $\eta_{p,q}\in H^0(\widehat{T}^{s}, \Omega_{\widehat{T}^{s}}^1(\log \widehat{B})).$ For the representation $\rho_{A}\colon \pi_1(V)\to \SL(m, k(A)_{\infty})$, the composition \[\rho_A|_{\widehat{C}^o_i}\colon \pi_1(\widehat{C}^o_i)\to \pi_1(V)\to \SL(m, k(A)_{\infty})\] is the trivial representation, since $\pi_1(\widehat{C}^o_i)\to\pi_1(V)\to \pe(V)$ is the trivial homomorphism and $\SL(m, k(A)_{\infty})$ is residually finite. Also, note that spectral coverings are functorial under the pullback of representations of fundamental groups (See e.g., \cite[Section 3.1]{JZ97} and \cite[Section 4.1.3]{Zuo99} for constructions spectral coverings of $p$-adic representations and Higgs bundles with logarithmic poles, respectively). Hence for each $C_i$ and any irreducible component $C_i^j$ of the scheme theoretic pullback $\pi^*C_i$, either
\begin{enumerate}
\item $\pi$ is \'etale along $C_i^j$, or
\item $\pi|_{C_i^j}$ is nonreduced.
\end{enumerate}
\cite[Lemma 3.2]{Zuo96b} for similar arguments). Hence by \cite[Proposition II.8.10]{Har77} we get that for each connected component $E$ of $\pi^*C$, either $\pi$ is \'etale along $E$, or there exist $1\leq p<q\leq l$ such that $\omega_p-\omega_q$ vanishes along $E$.

For the case (1) that $\pi$ is \'etale along $E$, without loss of generality, we can assume that $\sigma^*E$ shares no common component with the exceptional divisor of $\sigma$, and hence $\sigma^*E$ intersects mildly with respect to $\widehat{B}$, since $E$ does not intersect with the singular loci of $X^s$ and the singular loci of $\pi^{-1}(D)$. Since $\rho_A|_{\widehat{C}^o_i}\colon \pi_1(\widehat{C}^o_i)\to \pi_1(V)\to \SL(m, k(A)_{\infty})$ is the trivial representation for each $i$, $u_{\rho}$ is constant on the preimage of $C$ in $\widetilde{V}$. Hence $\eta_{p,q}|_{\sigma^*E}=0$. For the case (2) that $\omega_p-\omega_q$ vanishes along $E$, the logarithmic 1-form $\eta_{p,q}$ corresponding to $\omega_p-\omega_q$ vanishes along the effective divisor $\sigma^*E$. If $\widehat{B}'$ is the common components of $\sigma^*E$ and $\widehat{B}$,  by \cite[2.3 Properties (c)]{EV92} we have \[\eta_{p, q}\in H^0(\widehat{T}^{s}, \Omega_{\widehat{T}^{s}}^1(\log   \widehat{B}-\widehat{B}')).\]
Then we have that $\sigma^*E$ intersects mildly with respect to $\widehat{B}-\widehat{B}'$. Similarly, we have $\eta_{p,q}|_{\sigma^*E}=0$. Notice also there exists a connected component $E$ of $(\pi\circ\sigma)^*C$ such that $E^2>0$, since $((\pi\circ\sigma)^*C)^2=\deg\pi \cdot C^2>0$. This contradicts to Lemma \ref{lem:quasiprospurr}. Hence  we get \[\dim \MB(U, m)=\dim \MB(V, m)=0.\]

For any semisimple representation $\rho$ in the conjugacy class in $M_{\text{B}}(V, m)$, $\rho$ is valued in some number field $K$, since $\rho$ is rigid. Assume that $[\rho(\pi_1(V))\colon \rho(\pi_1(V))\cap \SL(m, \mathcal{O}_K)]=\infty$ where $\mathcal{O}_K$ is the ring of integers  in $K$, then by the construction in Section \ref{sect:p-adicrep} we get the spectral cover $\pi\colon T^s\to T$ from a normal surface $T^s$ to $T$ together with $l$ holomorphic $1$-forms $\omega_1, \ldots, \omega_l\in H^0(T^s, \pi^*\Omega^1_{T}(\log D_{\infty}))$ with log poles along the boundary, and the ramified loci of $\pi$ are contained in the union of the zero loci of $\{\omega_i-\omega_j\}_{1\leq i<j\leq l}$. The argument follows exactly the same as before, which gives a contradiction to Lemma \ref{lem:quasiprospurr}. Hence 
$[\rho(\pi_1(V))\colon \rho(\pi_1(V))\cap \SL(m, \mathcal{O}_K)]<\infty$. This proves the second part of the proposition.
\end{proof}

\subsection{Some group theoretic lemmas} For smooth algebraic varieties, we have the following simple observations.

\begin{lemma}\label{Lemma:etalefiniteimage}
Let $f\colon X^e\to X$ be a finite \'etale morphism. If any finite dimensional complex linear representation of $\pi_1(X^e)$ has finite image, then any finite dimensional complex linear representation of $\pi_1(X)$ also has finite image.
\end{lemma}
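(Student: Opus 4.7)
The plan is to reduce the statement to a purely group-theoretic fact about finite-index subgroups. The topological input is simply that a finite \'etale morphism $f\colon X^e \to X$ induces an injection $f_*\colon \pi_1(X^e)\hookrightarrow \pi_1(X)$ whose image has index equal to $\deg f<\infty$. So we are reduced to proving: if $H\leq G$ is a subgroup of finite index, and every finite-dimensional complex linear representation of $H$ has finite image, then every finite-dimensional complex linear representation of $G$ has finite image.

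The key step is as follows. Given any representation $\rho\colon \pi_1(X)\to \GL(m,\C)$, I would restrict it along $f_*$ to obtain a representation $\rho\circ f_*\colon \pi_1(X^e)\to\GL(m,\C)$. By hypothesis, $\rho(f_*(\pi_1(X^e)))$ is a finite subgroup of $\GL(m,\C)$. Now choose finitely many coset representatives $g_1,\dots,g_k\in \pi_1(X)$ for $f_*(\pi_1(X^e))$, where $k=[\pi_1(X):f_*\pi_1(X^e)]\leq \deg f$. Then
\[
\rho(\pi_1(X)) \;=\; \bigcup_{i=1}^{k} \rho(g_i)\cdot \rho\bigl(f_*\pi_1(X^e)\bigr),
\]
exhibiting $\rho(\pi_1(X))$ as a finite union of finite sets; hence it is finite.

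I do not anticipate any genuine obstacle here: the only subtlety to watch is that $f_*\pi_1(X^e)$ need not be normal in $\pi_1(X)$, so one cannot directly quotient; writing the image as a union of cosets (rather than forming a quotient group) sidesteps this. If one preferred a cleaner formulation, one could alternatively pass to the normal core $N=\bigcap_{g\in \pi_1(X)} gf_*\pi_1(X^e)g^{-1}$, which is still of finite index in $\pi_1(X)$ and corresponds to a further finite \'etale cover; the hypothesis then applies to $N$ via pullback, and $\rho(\pi_1(X))/\rho(N)$ is a quotient of the finite group $\pi_1(X)/N$, giving the same conclusion. Either route yields the lemma in a few lines.
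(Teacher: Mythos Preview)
Your proposal is correct and follows essentially the same approach as the paper: restrict $\rho$ along $f_*$, use that $[\pi_1(X):f_*\pi_1(X^e)]<\infty$, and conclude finiteness of $\rho(\pi_1(X))$ from finiteness of $\rho(f_*\pi_1(X^e))$. The paper's proof is just a one-line version of your coset-union argument.
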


\begin{proof}
Take any representation $\rho\colon \pi_1(X)\to \GL(m, \C)$. Since $[\pi_1(X)\colon f_{\ast}(\pi_1(X^e))]<\infty$ and $\rho\circ f_{\ast}(\pi_1(X^e))$ is finite, we have $\rho(\pi_1(X))$ is finite.
\end{proof}
\begin{lemma}\label{trivialimagefund}
Let $U$ be a smooth variety and $Y=\sum_{i=1}^ra_iY_i\subset U$ be a $\pi_1$-small divisor on $U$. Then there is a finite Galois \'etale cover $\tau\colon U^e\to U$ such that for any irreducible divisor $W$ in $\tau^*Y$, the composition \textup{$$\pi_1(\widehat{W})\to\pi_1(U^e)\to\pe(U^e)$$} is trivial, where $\widehat{W}$ is a desingularization of $W$.
\end{lemma}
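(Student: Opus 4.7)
The plan is to take $U^e$ to be the finite \'etale Galois cover associated to a carefully chosen open normal subgroup of $\pe(U)$. By the $\pi_1$-smallness hypothesis, for each $i=1,\ldots,r$ the image $G_i$ of $\varphi_i\colon\pi_1(\widehat{Y}_i)\to\pi_1(U)$ is finite, and hence so is its image $\overline{G}_i$ under the canonical map $\pi_1(U)\to\pe(U)$. Since $\pe(U)$ is profinite, hence Hausdorff with a neighbourhood basis of open normal subgroups at the identity, and since the union $\overline{G}_1\cup\cdots\cup\overline{G}_r$ is a finite set, for each of its finitely many nontrivial elements $g$ I can select an open normal subgroup of $\pe(U)$ missing $g$; intersecting these finitely many subgroups yields an open normal subgroup $\widehat{N}\triangleleft\pe(U)$ with $\widehat{N}\cap\overline{G}_i=\{1\}$ for every $i$. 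I then let $\tau\colon U^e\to U$ be the finite \'etale Galois cover corresponding to $\widehat{N}$, so that $\pe(U^e)$ is naturally identified with the open subgroup $\widehat{N}\subset\pe(U)$.

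Next, for an irreducible component $W$ of $\tau^{\ast}Y$, $W$ lies above some $Y_i$, and the restriction $W\to Y_i$ is finite \'etale since it is obtained by pulling back $\tau$ along $Y_i\hookrightarrow U$. I will then build a convenient desingularization of $W$ by taking any connected component $\widehat{W}$ of the fibre product $\widehat{Y}_i\times_{Y_i}W$: this fibre product is finite \'etale over the smooth variety $\widehat{Y}_i$ (hence smooth), and it is birational to $W$ via the other projection (since $\widehat{Y}_i\to Y_i$ is an isomorphism over a dense open set of $Y_i$). Thus $\widehat{W}$ is a smooth birational model of $W$, and by the ``some (hence any)'' clause in Definition~\ref{def:pi1small} it suffices to verify the triviality claim for this specific resolution.

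The last step is a fundamental-group chase. The composition $\widehat{W}\hookrightarrow U^e\xrightarrow{\tau}U$ factors as $\widehat{W}\to\widehat{Y}_i\to Y_i\hookrightarrow U$, so the image of $\pi_1(\widehat{W})$ in $\pi_1(U)$ lies inside both $G_i=\varphi_i(\pi_1(\widehat{Y}_i))$ and $\tau_{\ast}\pi_1(U^e)$. Projecting to $\pe(U)$ this image lies in $\overline{G}_i\cap\widehat{N}=\{1\}$; and since $\pe(U^e)\hookrightarrow\pe(U)$ identifies $\pe(U^e)$ with $\widehat{N}$, the composition $\pi_1(\widehat{W})\to\pi_1(U^e)\to\pe(U^e)$ must be trivial, which is the required conclusion. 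I do not anticipate any serious obstacle; the only delicate points are the identification $\pe(U^e)\cong\widehat{N}$ and the compatibility of the factorization of $\widehat{W}\to U$ with the factorization through $\widehat{Y}_i$, both of which are standard features of the theory of \'etale covers.
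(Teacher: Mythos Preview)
Your approach is correct in spirit and is more direct than the paper's: the paper argues by induction on the cardinality of $I_i=\im\{\pi_1(\widehat{Y}_i)\to\pi_1(U)\}$, at each stage passing to a cover that strictly shrinks this image, whereas you construct the required cover in a single step by using Hausdorff separation in $\pe(U)$ to find an open normal subgroup $\widehat{N}$ meeting each $\overline{G}_i$ trivially. Your route also handles all the $Y_i$ simultaneously, while the paper treats one $Y_i$ at a time and implicitly takes a common refinement.

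There is, however, a small gap in your desingularization step. You assert that $W\to Y_i$ is finite \'etale ``since it is obtained by pulling back $\tau$ along $Y_i\hookrightarrow U$''. What is \'etale is the full preimage $\tau^{-1}(Y_i)=Y_i\times_U U^e\to Y_i$; an individual irreducible component $W$ of $\tau^{-1}(Y_i)$ need not be \'etale over $Y_i$ when $Y_i$ is not normal (for instance, over a nodal curve $Y_i$ an \'etale double cover can be a cycle of two $\mathbb{P}^1$'s, and each $\mathbb{P}^1$ maps to $Y_i$ by the normalization, which is not \'etale). Consequently your claim that $\widehat{Y}_i\times_{Y_i}W\to\widehat{Y}_i$ is \'etale, hence smooth, is not justified.

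The fix is immediate: replace $\widehat{Y}_i\times_{Y_i}W$ by $\widehat{Y}_i\times_U U^e$. This fibre product is genuinely finite \'etale over $\widehat{Y}_i$ (as the pullback of $\tau$), hence smooth with irreducible connected components; it maps to $\tau^{-1}(Y_i)$ via a proper map that is an isomorphism over the dense open where $\widehat{Y}_i\to Y_i$ is, so exactly one component $\widehat{W}$ dominates $W$ and does so birationally. The rest of your fundamental-group chase then goes through verbatim, since $\widehat{W}\to U^e\to U$ still factors through $\widehat{Y}_i\to U$ and $\pe(U^e)\hookrightarrow\pe(U)$ identifies $\pe(U^e)$ with $\widehat{N}$.
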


\begin{proof}
It suffices to show the lemma for a fixed $Y_i$. By the assumption, we have that  $\pi_1(\widehat{Y}_i)\to\pi_1(U)$ is finite for any desingularization $\widehat{Y}_i$. Denote the finite group $\im\{\pi_1(\widehat{Y}_i)\to\pi_1(U)\}$ by $I_i$. Then there is a finite index normal subgroup $G$ of $\pi_1(U)$ such that \[G\cap I_i=\ker \{\pi_1(U)\to\pe(U)\}\cap I_i.\]
If $\pi_1(\widehat{W})\to\pi_1(U^e)\to\pe(U^e)$ is not trivial, then $I_{i}\not\subset \ker \{\pi_1(U)\to\pe(U)\}$. Therefore one can take the Galois \'etale cover $\tau\colon U^e\to U$ corresponding to the above proper normal subgroup $G\lhd\pi_1(U)$. Each irreducible divisor $W$ in $\tau^*Y_{i}$ satisfies the following inequality of cardinalities \[|\im\{\pi_1(\widehat{W})\to\pi_1(U^e)\}|<|I_{i}|,\] where $\widehat{W}$ is a desingularization of $W$. Then by the induction on the cardinality of $I_i$, the lemma holds true.
\end{proof}

\subsection{Proof of the main theorem}
\begin{proof}[Proof of Theorem \ref{Thm:main}]
Without loss of generality, we may assume that $Y$ is an effective divisor. We use the same notations listed in the proof of Proposition \ref{prop:finitebetti}. 
\begin{itemize}
\item $B_i\coloneqq D_i|_T$, $B\coloneqq \sum_{i=1}^sB_i$;
\item $V\coloneqq T-B$;
\item $C^o\coloneqq Y^o|_T=\sum_{i=1}^{r} n_i C^o_i$, where $C^o_i\coloneqq {Y^o_i}|_T$ is irreducible by Bertini theorem;
\item $C\coloneqq Y|_T=\sum_{i=1}^{r} n_i C_i$, where $C_i\coloneqq {Y_i}|_T$;
\item $\widehat{C}^o_i$ are desingularizations of $C_i^o$.
\end{itemize}
Notice that the smooth projective surface $T$ together with the normal crossing boundary divisor $B$ and the divisor $C$ satisfy assumptions (\ref{prop-finite:2}) and (\ref{prop-finite:3}) of Proposition \ref{prop:finitebetti} or Theorem \ref{Thm:main}. Also, the homomorphism \[\pi_1(\widehat{C}^o_i)\to \pi_1(V)\cong\pi_1(U)\] has finite image for each $i$ and some desingularization $\widehat{C}^o_i$ of $C^o_i$.
By Lemma \ref{trivialimagefund}, one can take a finite Galois \'etale cover $\tau\colon V^e\to V$ such that the divisor $\tau^*C^o$ satisfies assumption (\ref{prop-finite:1}) in Proposition \ref{prop:finitebetti}.  By \cite[Theorem 3.8]{GKP16}, $\tau$ can be extended to a finite Galois morphism $\bar{\tau}\colon \overline{V}^e\to T$ where $\overline{V}^e$ is a normal projective surface. Then by \cite[Theorem III.5.2]{BHPV04}, the singular loci $\Sing\overline{V}^e$ of $\overline{V}^e$ are contained in $\bar{\tau}^{-1}(\Sing B)$ and $\bar{\tau}$ is \'etale outside of $\Sing\overline{V}^e$. In particular, $\bar{\tau}^{-1}(C)\cap \Sing \overline{V}^e=\emptyset$ by assumption (\ref{prop-finite:2}) of Proposition \ref{prop:finitebetti} or Theorem \ref{Thm:main}. After resolving the singularities of $\overline{V}^e$, we get a generically finite morphism  $\tau'\colon T'\to T$  such that $\tau'|_{V^e}=\tau$ and $T'$ is a smooth projective surface with $T'-V^e$ being a simple normal crossing divisor. Since $\bar{\tau}^{-1}(C)\cap \Sing \overline{V}^e=\emptyset$, $\tau'^*C$ intersects mildly with respect to $T'-V^e$.
Since $\tau'$ is generically finite, \[(\tau'^*C)^2=\deg \tau'\cdot C^2=\deg\tau'\cdot Y^2\cdot H^{n-2}>0.\] Putting the previous discussions together, and according to Lemma \ref{Lemma:etalefiniteimage}, we may assume  from now on that our original $X, U, D, Y$ satisfy assumptions (\ref{prop-finite:1}), (\ref{prop-finite:2}) and (\ref{prop-finite:3}) of Proposition \ref{prop:finitebetti} and $\dim X=2$. In the following proof, we fix the polarization $H$ and complete the proof by the following steps. 

\subsubsection*{Step 1.} For any representation $\rho\colon \pi_1(U) \to \GL(m, \C)$, we claim that the semisimplification $[\rho]\in\MB(U,m)$ can be deformed to a semisimple representation  in $\MB(U,m)$ such that the corresponding local system underlies a $\C$-PVHS with quasi-unipotent local monodromy along $D$.

In fact, the claim directly follows from  a recently announced result \cite[Theorem 3.2, Corollary 3.3]{EK21}. Here we prove the claim using the method in Mochizuki \cite[Theorem 10.5]{Moc06}. First notice that $\rho$ can be deformed to a semisimple representation $\rho'$ in $\Hom(\pi_1(U), \GL(m, \C))$. The corresponding flat bundle $(V', \mathcal{D}')$ of $\rho'$ admits a tame harmonic metric (Corlette-Jost-Zuo metric \cite{JZ97}). Then the corresponding  parabolic Higgs bundle $(E', \ba E', \theta')$ is a locally abelian $\mu_H$-polystable parabolic Higgs bundle with trivial parabolic Chern classes over $(X, D)$ by Theorem \ref{Thm:K-H-corresp}. According to Proposition \ref{lem:Perturb}, we can deform $(E', \ba E', \theta')$ to another locally abelian $\mu_H$-polystable parabolic Higgs bundle $(E', \baa E', \theta')$ with trivial parabolic Chern classes and all of its jump weight vectors are contained in $\Q^s$.  Next, by the same argument using $\C^*$-action $t\colon (E', \baa E', \theta')\to (E', \baa E', t\cdot\theta')$ in \cite[Theorem 10.5]{Moc06}, we have that $(E', \baa E', \theta')$ deforms to a parabolic Higgs bundle $(E'', \baa E'', \theta'')$ which has the same jump weight vectors as $(E', \baa E', \theta')$ and comes from a $\C$-PVHS. Since the jump weight vectors are rational, $(E'', \baa E'', \theta'')$ comes from a $\C$-PVHS with quasi-unipotent local monodromy along $D$. Moreover, the corresponding local system is semisimple (see  \cite[Proposition 1.13]{Del87}).

\subsubsection*{Step 2.} Reduce to the unipotent case.

By Step 1, for any semisimple representation $\rho\colon \pi_1(U)\to \GL(m, \C)$, the corresponding local system underlies a quasi-unipotent $\C$-PVHS 
$(V, \mathcal{D}, k_V)$ of some weight $w$, since $\dim\MB(U, \C)=0$ by Proposition \ref{prop:finitebetti}. By Kawamata \cite{Ka81},  there is a finite morphism $f\colon X'\to X$ from a smooth projective variety $X'$ with $D' \coloneqq (f^*D)_{\text{red}}$ being a simple normal crossing divisor to $X$, such that $f^*(V, \mathcal{D}, k_V)$ is a $\C$-PVHS with unipotent local monodromy along $D'$. By the construction in Sections \ref{sect:PHBCVHS} and \ref{sect:Semineg}, the pullback $\C$-PVHS $f^*(V, \mathcal{D}, k_V)$ gives rise to a parabolic Higgs bundle $(E'=\oplus_iE'^{i, w-i}, \ba E', \theta')$ with trivial parabolic structure, since it is unipotent. By Proposition \ref{thm:vhs-tame} and Theorem \ref{Thm:K-H-corresp}, $(E'=\oplus_iE'^{i, w-i}, \ba E', \theta')$ is a locally abelian $\mu_{f^*H}$-polystable parabolic Higgs bundle with trivial parabolic Chern classes. As explained in Section \ref{sect:Semineg}, we get a Higgs bundle $(\diae'=\oplus_i \diae'^{i, w-i}, \theta')$ over $X'$ with Higgs field $\theta'\colon \diae'^{i, w-i}\to \Omega_{X'}^1(\log D')\otimes \diae'^{i-1, w-i+1}$. Since the parabolic structure is trivial, we have (see e.g., \cite[Section 3]{AHL19})
\begin{equation}\label{eq:para-normal}
\pc_i((E'=\oplus_iE'^{i, w-i}, \ba E', \theta'))=c_i(\diae')
\end{equation}
Hence we get\begin{equation}\label{eq:degree0}
\deg (\diae')= c_1(\diae')\cdot (f^*H)=\pc_1((E'=\oplus_iE'^{i, w-i}, \ba E', \theta'))\cdot (f^*H)=0
\end{equation}

\subsubsection*{Step 3.} Claim that $\theta'=0$.

For the $\mu_{f^*H}$-polystable Higgs bundle $(\diae'=\oplus_i \diae'^{i, w-i}, \theta')$ over $(X', D')$, assume that $\theta'\neq0$, then there exists a $\mu_{f^*H}$-stable direct summand $(\diaf'=\oplus_i \diaf'^{i, w-i}, \theta')$ of $(\diae'=\oplus_i \diae'^{i, w-i}, \theta')$  such that $\theta'(\diaf'^{i,w-i})\neq0$ for some direct summand $\diaf'^{i,w-i}$ of $\diaf'$. Since $\theta'(\diaf'^{i, w-i})\subset \Omega_{X'}^1(\log D')\otimes \diaf'^{i-1, w-i+1}$, one can take the smallest $i_0$ such that $\diaf'^{i_0,w-i_0}\neq0$ and $\diaf'^{i_0,w-i_0}\subset \ker\theta'$. Since $(\diaf'=\oplus_i \diaf'^{i, w-i}, \theta')$ is a $\mu_{f^*H}$-stable parabolic Higgs bundle with trivial parabolic structure, we have \[\frac{\deg(\diaf'^{i_0,w-i_0})}{\rank(\diaf'^{i_0,w-i_0})}<\frac{\deg(\diaf')}{\rank(\diaf')}=\frac{\deg(\diae')}{\rank(\diae')}=0.\] Hence we get \begin{equation}\label{eq:fhn-1<0}
c_1(\diaf'^{i_0,w-i_0})\cdot (f^*H)<0.
\end{equation}

For the finite morphism $f\colon X'\to X$ in Step 2, we consider $Y'\coloneqq f^*Y$. Note that $Y'^2=\deg f\cdot Y^2>0$, and $Y'$ intersects mildly with respect to $D'$. For each irreducible component $Y'_k$ of $Y'$, we have the following commutative diagram
\begin{equation}\label{diag:cut}
\xymatrix{
\widehat{Y}_k'\ar[d]\ar[r]&Y'_k\ar[d]\ar@{^{(}->}[r]&  U'\coloneqq X'-D'\ar[d]^f \\
\widehat{Y}_i\ar[r] &Y_i\ar@{^{(}->}[r]&  U}
\end{equation} where $Y_i=f(Y'_k)$; $\widehat{Y}_k'$ and $\widehat{Y}_i$ are desingularizations of $Y_k'$ and $Y_i$, respectively. For the representation $\rho\colon \pi_1(U)\to \GL(m, \C)$ in Step 2, the composition induced by the diagram (\ref{diag:cut}) \[\pi_1(\widehat{Y}_k')\to\pi_1(\widehat{Y}_i)\to\pi_1(U)\to \GL(m, \C)\]
is trivial, since $\pi_1(\widehat{Y}_i)\to \pi_1(U)\to \pe(U)$ is trivial for all $i$ by assumption (\ref{prop-finite:1}) of Proposition \ref{prop:finitebetti}. Hence the pullback of the corresponding $\C$-PVHS 
$(V, \mathcal{D}, k_V)$ of $\rho$ to $\widehat{Y}_k'$ is trivial. In particular, we have $\theta'|_{\widehat{Y}_k'}=0$ and \[(\diaf'|_{\widehat{Y}_k'}, \theta'|_{\widehat{Y}_k'})=\bigoplus_i(\diaf'^{i, w-i}|_{\widehat{Y}_k'}, 0).\] Since $\bigoplus_i(\diaf'^{i, w-i}|_{\widehat{Y}_k'}, 0)$ is again a polystable Higgs bundle with trivial Chern classes, \[\deg \diaf'^{i_0, w-i_0}|_{\widehat{Y}_k'}=\deg\diaf'|_{\widehat{Y}_k'}=0.\] This implies \begin{equation}\label{fc=0}
c_1(\diaf'^{i_0,w-i_0})\cdot Y'_k=c_1(\diaf'^{i_0,w-i_0})\cdot Y'=0.
\end{equation}
By Theorem \ref{Thm:semi-neg}, we also have 
\begin{equation}\label{f2hn-2>=0}
c_1(\diaf'^{i_0,w-i_0})^2\geq 0.
\end{equation} 
By equation (\ref{eq:fhn-1<0}), there are positive numbers $m, n>0$ such that $(m  Y'+nc_1(\diaf'^{i_0,w-i_0}))\cdot f^*H=0$. Then $(mY'+nc_1(\diaf'^{i_0,w-i_0}))^2\leq 0$ by the Hodge index theorem. Hence 
$Y'^2\leq0$ by equation (\ref{fc=0}) and (\ref{f2hn-2>=0}), which contradicts to the positivity of $Y'^2$. Thus we have $\theta'=0$.

\subsubsection*{Step 4.} Every linear representation of $\pi_1(U)$ is finite.

Take a general hyperplane curve $L$ of the linear system $|f^*H|$ on $X'$. Denote $A\coloneqq L\cap D'$. Consider the morphisms of log varieties
\[\xymatrix{
(L, A)\ar[r]^i& (X', D')\ar[r]^f&  (X, D)}\]
Since the pullback $\C$-PVHS $f^*(V, \mathcal{D}, k_V)$ corresponds to the parabolic Higgs bundle $(E'=\oplus_iE'^{i, w-i}, \ba E', \theta')$ with $\theta'=0$ by Step 3, the $\C$-PVHS $i^*f^*(V, \mathcal{D}, k_V)$ corresponds to a locally abelian polystable parabolic vector bundle with trivial Chern classes. By \cite{MS80}, the composition representation of $\pi_1(L-A)$
\[\xymatrix{
\pi_1(L-A)\ar[r]^-{i_*}& \pi_1(U')\ar[r]^-{f_*} &\pi_1(U)\ar[r]^-{\rho}&\GL(m, \C)}\]
is unitary. By \cite[Theorem 1.1.3]{HL85} and \cite[Proposition 1.3]{Camp91}, $i_*$ is surjective and $$[\pi_1(U)\colon f_*\pi_1(U')]<\infty.$$ Also, by Proposition \ref{prop:finitebetti}, the representation $\rho\colon \pi_1(U)\to \GL(m, \C)$ is valued in some algebraic integers $\mathcal{O}_K$ (We do further \'etale covers of $U$ at the beginning of the argument if necessary). Hence the image of any semisimple representation $\rho$ of $\pi_1(U)$ is finite. Notice also $H_1(U, \Z)=0$ by Corollary \ref{cor:trivialh1}. Then the finiteness of an arbitrary representation $\eta$ follows exactly the same argument as that in \cite[Page 124]{Zuo99}. More specifically, by Lemma \ref{Lemma:etalefiniteimage} we can take the \'etale cover $\tau\colon U^e\to U$ such that the image of the representation $\eta\circ\tau_*$ is contained a unipotent subgroup. Then via analyzing the composition series of the unipotent subgroup, we get that $\eta$ has finite image, since $H_1(U^e, \Z)=0$.
\end{proof}

By the above proof, we also have the following \begin{corollary}\label{cor:strong-main}
Let $X$ be a smooth projective variety with a normal crossing divisor $D$ and a divisor $Y=\sum_{i=1}^r n_iY_i$. For $U\coloneqq X-D$ and $Y^o\coloneqq Y|_U$,  suppose that 
\begin{enumerate}
\item \label{cor:main1} The homomorphism \textup{\[\pi_1(\widehat{Y}^o_i)\to \pi_1(U)\to\pe(U)\]} is trivial for each $i$ and some desingularization $\widehat{Y}^o_i$ of $Y^o_i$,

\item \label{cor:main2} $Y$ intersects mildly with respect to $D$, and

\item \label{cor:main3} There is an ample divisor $H$ on $X$, such that $Y^2\cdot H^{n-2}>0$.
\end{enumerate}
Then any linear representation of the fundamental group $\rho\colon\pi_1(U)\to \GL(m, \C)$ is finite. 
\end{corollary}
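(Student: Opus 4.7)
The plan is to run the proof of Theorem \ref{Thm:main} essentially verbatim, skipping only its initial reduction step that invoked Lemma \ref{trivialimagefund}. In that earlier proof the lemma was used \emph{solely} to pass from the $\pi_1$-small hypothesis (finite image of $\pi_1(\widehat{Y}^o_i)\to\pi_1(U)$) to the strictly weaker condition that $\pi_1(\widehat{Y}^o_i)\to\pi_1(U)\to\pe(U)$ is trivial, by replacing $U$ with a finite Galois \'etale cover (after extending the cover across the boundary by \cite{GKP16} and resolving). In Corollary \ref{cor:strong-main} this \'etale-triviality is already the direct hypothesis, so no such cover is required, and in any case Lemma \ref{Lemma:etalefiniteimage} makes finiteness of representations invariant under finite \'etale covers of $U$, so even gratuitously passing to one would be harmless.

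Concretely, I would first cut $X$ with $n-2$ general hyperplanes of $|H|$ to produce a smooth projective surface $T$ with normal crossing boundary $B=D|_T$ and curve configuration $C=Y|_T$, and set $V=T-B$. The quasiprojective Lefschetz theorem \cite[Theorem 1.1.3]{HL85} identifies $\pi_1(V)$ and $\pe(V)$ with $\pi_1(U)$ and $\pe(U)$ respectively; Bertini applied to the very ample $H$ yields the irreducibility of each $C_i^o$ and the mild-intersection condition with respect to $B$; the intersection number becomes $C^2=Y^2\cdot H^{n-2}>0$. Thus all three hypotheses of Proposition \ref{prop:finitebetti} hold for $(X,D,Y)$ directly, and I would invoke that proposition to obtain $\dim\MB(U,m)=0$ for every $m$, together with the fact that, after a possibly further finite \'etale cover, every semisimple representation takes values in some ring of algebraic integers $\mathcal{O}_K$.

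The remainder I would carry out as Steps 1--4 of the proof of Theorem \ref{Thm:main}, applied verbatim. Step~1 will deform the semisimplification of an arbitrary $\rho$, via the perturbation Proposition \ref{lem:Perturb} and the $\C^{*}$-action trick of \cite{Moc06}, through $\mu_H$-polystable parabolic Higgs bundles to one underlying a $\C$-PVHS with quasi-unipotent monodromy. Step~2 will apply Kawamata's covering trick $f\colon X'\to X$ to unipotentize the monodromy and, via Theorem \ref{Thm:K-H-corresp}, produce a logarithmic Higgs bundle $(\diae'=\bigoplus\diae'^{i,w-i},\theta')$ on $(X',D')$ with $\deg\diae'=0$. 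Step~3 is the only step genuinely sensitive to hypothesis (\ref{cor:main1}) and is the main point to verify: assuming $\theta'\neq 0$, a $\mu_{f^{*}H}$-stable summand forces a kernel piece $\diaf'^{i_0,w-i_0}$ of strictly negative $\mu_{f^{*}H}$-degree, while the \'etale-triviality of (\ref{cor:main1}), combined with the fact that $\rho$ factors through $\pe(U)$ (thanks to the $\mathcal{O}_K$-valuedness of Step~1 and the residual finiteness of $\GL(m,\mathcal{O}_K)$), forces the pullback $\C$-PVHS to be trivial on any desingularization $\widehat{Y}'_k$ of a component of $Y'=f^{*}Y$; hence $c_1(\diaf'^{i_0,w-i_0})\cdot Y'=0$, Theorem \ref{Thm:semi-neg} gives $c_1(\diaf'^{i_0,w-i_0})^2\geq 0$, and a Hodge-index contradiction with $Y'^{2}>0$ follows. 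Step~4 will then combine Mehta--Seshadri along a generic line in $|f^{*}H|$ with the vanishing $H^{1}(U,\C)=0$ from Corollary \ref{cor:trivialh1} to upgrade the finiteness of semisimple representations to finiteness of all representations. The only point meriting actual scrutiny rather than quotation is that Step~3's key vanishing is powered \emph{precisely} by the \'etale-triviality hypothesis and never required the stronger $\pi_1$-smallness, so the corollary is really a relabelled specialisation of the Theorem \ref{Thm:main} machinery.
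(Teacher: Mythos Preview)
Your proposal is correct and matches the paper's own treatment: the paper gives no separate argument for Corollary~\ref{cor:strong-main} beyond the phrase ``By the above proof, we also have the following,'' and your explanation accurately identifies that the only place the proof of Theorem~\ref{Thm:main} invokes $\pi_1$-smallness rather than \'etale-triviality is the initial passage through Lemma~\ref{trivialimagefund}, which becomes unnecessary under hypothesis~(\ref{cor:main1}). One tiny wording quibble: the $\mathcal{O}_K$-valuedness you appeal to in Step~3 comes from Proposition~\ref{prop:finitebetti}, not from the deformation in Step~1 (Step~1 only produces the $\C$-PVHS structure, while the integrality and residual finiteness are furnished by the second assertion of Proposition~\ref{prop:finitebetti}); otherwise the argument is exactly as in the paper.
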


To end this section, we prove Corollary \ref{Cor:main-proj} and Corollary \ref{Cor:infiniteneg} in the introduction.

\begin{proof}[Proof of Corollary \ref{Cor:main-proj}]
Consider the resolution of singularities $\mu\colon X\reso\to X$. By \cite{Tak03}, $\pi_1(X\reso)\cong\pi_1(X)$. Notice that $\mu^*Y$ is a $\pi_1$-small divisor on $X\reso$. For the ample line bundle $H$, $\mu^*H$ is big and nef. Hence there is an effective divisor $B$ and an ample $\Q$-divisor $H_k$ on $X\reso$ such that $\mu^*H\sim_{\text{num}}H_k+\frac{1}{k}B$ for $k\gg 1$. Thus we have $$(\mu^*Y)^2\cdot H_k^{n-2}=(\mu^*Y)^2\cdot(\mu^*H-\frac{1}{k}B)^{n-2}.$$ One can take a large enough $k$ such that $(\mu^*Y)^2\cdot H_k^{n-2}>0$. Then the corollary follows from Theorem \ref{Thm:main}.
\end{proof}

\begin{proof}[Proof of Corollary \ref{Cor:infiniteneg}]
By \cite[Lemma 4.1]{MVZ09} and the assumption of Corollary \ref{Cor:infiniteneg}, there is a divisor $D\coloneqq \sum_ia_iY_i$ with $a_i\in\N$ such that $D^2\cdot H^{n-2}>0$. Then the corollary follows directly from Theorem \ref{Thm:main}.
\end{proof}

\section{$\pi_1$-small curves and hyperbolicity properties of surfaces}

In this section, we prove Proposition  \ref{C}  and Theorem \ref{Thm:main-hyperbolicity} in the introduction.

\begin{proof}[Proof of Proposition \ref{C}]
Consider the Shafarevich map of $X$ $$\Sh_X\colon X\dashrightarrow \Sh(X).$$ Since $\pi_1(X)$ is infinite, $\dim \Sh(X)>0$ by Theorem \ref{Thm:Shafarevich}. For the set of $\pi_1$-small curves $\{C_i\}_{i\in\Lambda}$, we need to show that if one of the following two conditions holds true \begin{itemize}
\item The statement (\ref{glf1}) does not hold, and $C_i^2>0$ for $i\in\Lambda$,
\item $C_i^2<0$ for $i\in\Lambda$,
\end{itemize}
then $X$ has generically large fundamental group, i.e., $\dim \Sh(X)=2$. We assume by contradiction that $\dim \Sh(X)=1$.
Take blow-ups of $X$ to resolve the indeterminancy of $\Sh_X$ and denote $T$ to be the set of blow-up centers in this procedure. Since one of the above two conditions holds, there is an irreducible $\pi_1$-small divisor $D\in\{C_i\}_{i\in\Lambda}$ such that $\Sh_X(D)$ is dense in $\Sh(X)$. Do more blow-ups of $X$ to resolve the singularities of $D$, we get the following commutative diagram
$$
\xymatrix{
\widehat{D}\ar[d]\ar@{^{(}->}[r]^-{\widehat{i}}& \widehat{X} \ar[d]^{\eta}\ar[r]^-{\widehat{\Sh}_X}&\Sh(X)\ar@{=}[d] \\
D \ar@{^{(}->}[r]^-{i}& X\ar@{-->}[r]^-{\Sh_X}&\Sh(X), }
$$
where $\eta$ is the composition of blow-ups, $\widehat{D}$ is the smooth strict transform of $D$, and $i$ and $\widehat{i}$ are natural inclusions. Since $\widehat{\Sh}_X$ is birational to $\Sh_X$, the general fibre of $\widehat{\Sh}_X$ is $\pi_1$-small. Let $S$ be the set of points on $\Sh(X)$ over which $\widehat{\Sh}_X$ is not smooth. Denote $\widehat{X}^o\coloneqq \widehat{\Sh}_X^{-1}(\Sh(X)-S)$, and the general fibre of $\widehat{\Sh}_X$ by $F$. Then we have the following diagram of fundamental groups$$
\xymatrix{
 \pi_1(F) \ar[r]&\pi_1(\widehat{X}^o)\ar[r]\ar@{->>}[d]^-{\phi}&\pi_1(\Sh(X)-S)\to \{1\}\\
&\pi_1(\widehat{X}), }
$$
where $\phi$ is surjective. Then the image of $\pi_1(F)$ is a normal subgroup of $\pi_1(\widehat{X}^o)$, which we denote as $N$. Hence $\phi(N)\lhd\pi_1(\widehat{X})$ is a finite normal subgroup. Since $\widehat{D}$ is $\pi_1$-small, the subgroup generated by $\phi(N)$ and $\widehat{i}_*(\pi_1(\widehat{D}))$ in $\pi_1(\widehat{X})$ is finite. However this contradicts to \cite[Proposition 1.4]{Camp91}, since $\pi_1(\widehat{X})\cong\pi_1(X)$ is an infinite group. 
\end{proof}

Next we use a similar method as in the proof of the above property together with \cite{Zuo96b} to show Theorem \ref{Thm:main-hyperbolicity}.  First we recall the following notations for algebraic groups (see e.g., \cite[ Section 2.1]{Zuo99}). A group $G$ is called \textsl{almost abelian} if $G$ contains a finite index abelian subgroup.  An algebraic group $G\subset \GL(n, \C)$ is called \textsl{almost simple} if all the proper normal algebraic subgroups are finite groups. Here $G$ is the group of complex points of the corresponding group scheme. We have the following group theoretic lemma.
\begin{lemma}\label{lem:almost-simple}
Any non-commutative almost simple complex algebraic group does not contain an infinite Zariski dense almost abelian group as a subgroup.
\end{lemma}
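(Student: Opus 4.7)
The approach is to derive a contradiction by showing that an infinite Zariski dense almost abelian subgroup $H \subset G$ would force $G$ itself to be abelian. The two elementary inputs are: a finite-index Zariski closed subgroup of a connected algebraic group is the whole group, and the Zariski closure of an abelian subgroup of an algebraic group is again abelian.

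First I would reduce to the case where $G$ is connected. The identity component $G^0$ is a closed normal subgroup, so by almost simplicity it is either all of $G$ or finite. If $G^0$ is finite, then $G$ has finitely many components each of dimension zero, so $G$ itself is finite, contradicting the existence of an infinite subgroup $H$ (an infinite subgroup cannot sit inside a finite group). Hence $G = G^0$ is connected, and in particular irreducible as an algebraic variety. This is the one place where the hypothesis that $H$ is infinite enters.

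Next, pick an abelian subgroup $H_0 \leq H$ of finite index $k$ and decompose $H = \bigsqcup_{i=1}^k g_i H_0$. Taking Zariski closures and using that left translation is a homeomorphism gives
\[
G \,=\, \overline{H} \,=\, \bigcup_{i=1}^k g_i\,\overline{H_0}.
\]
By irreducibility of $G$, some translate $g_i\,\overline{H_0}$ must coincide with $G$, whence $\overline{H_0} = G$. Now for any $h \in H_0$ the centralizer $C_G(h)$ is Zariski closed (being the preimage of $e$ under the morphism $g \mapsto g h g^{-1} h^{-1}$) and contains $H_0$, hence contains $\overline{H_0} = G$. Iterating: for any $g \in G = \overline{H_0}$ the centralizer $C_G(g)$ is closed and contains $H_0$ by the previous step, so it contains $\overline{H_0} = G$. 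Therefore $G$ is abelian, contradicting the non-commutativity hypothesis.

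There is no real obstacle here; the lemma is essentially a formal consequence of how Zariski closure interacts with finite-index subgroups and with commutativity. The only subtlety is making sure the almost simplicity assumption is invoked correctly to eliminate the disconnected case before applying irreducibility, which is where the word \emph{infinite} in the statement is needed.
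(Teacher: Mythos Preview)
Your proof is correct and follows essentially the same route as the paper's: both pass to a finite-index abelian subgroup $H_0$ of $H$, observe that its Zariski closure must be all of $G$, and conclude that $G$ is abelian. The paper's argument is terser---it just notes that $\overline{H_0}$ is a finite-index abelian subgroup of $G$ and declares this impossible---whereas you spell out the connectedness reduction and the irreducibility step explicitly, but the substance is the same.
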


\begin{proof}
Assume that there is an almost simple algebraic group $G$ containing an infinite Zariski dense almost abelian subgroup $H$. Then $\dim_{\C} G>0$. Let $A$ be an infinite abelian subgroup of $H$ such that $H/A$ is finite. Hence the Zariski closure $\overline{A}$ in $G$ is a finite index abelian subgroup of $G$. This is impossible.
\end{proof}

\begin{proof}[Proof of Theorem \ref{Thm:main-hyperbolicity}]
First note that (\ref{Thm:rat-curve}) follows immediately from Theorem \ref{Thm:L-R} and Miyaoka-Yau-Sakai inequality. In fact, by the assumption and Theorem \ref{Thm:L-R}, any rational curve $C$ on $X$ satisfies that $C^2\leq0$. Apply the orbibundle Miyaoka-Yau-Sakai inequality \cite[Theorem 1.3 (i)]{Miy08} to the rational curve $C$ on $X$, we have that for any real number $\alpha$ with $0\leq \alpha\leq 1$, the inequality
$$\frac{\alpha^2}{2}(C^2 +3K_{X}\cdot C+6)-2\alpha(K_{X}\cdot C+3)+3c_2(X)-K_{X}^2 \geq0
$$
holds, where $K_{X}$ is the canonical line bundle, and $c_2(X)$ is the topological Euler characteristic. Choose $\alpha=\frac{1}{3}$, we get a uniform bound of canonical degree of rational curves $C$ in terms of  topological invariants of $X$ $$K_{X}\cdot C\leq 6c_2(X)-2K_{X}^2-\frac{5}{3}.$$ Then rational curves form a bounded family on $X$. Since $X$ is of general type, there are only finitely many of rational curves on $X$.  

For (\ref{Thm:ell-curve}), 
consider the case that the irregularity $q(X)>1$, then by \cite[Theorem 1.1]{LP12} the canonical degrees of geometric genus $1$ curves $C$ of $X$ satisfy \[K_{X}\cdot C\leq \max\{K_{X}^2, 4K_{X}^2+2g-2\}.\] Hence $X$ admits at most finitely many geometric genus 1 curves. To finish the proof of (\ref{Thm:ell-curve}), we may assume that $q(X)=b_1(X)=0$. Also, we may assume that the representation $\rho$ is semisimple. In fact, if the images of all finite dimensional semisimple representations are finite, then any finite dimensional linear representation is finite by the argument in Step 4 of Theorem \ref{Thm:main}.  

Now consider the Zariski closure $\overline{\rho(\pi_1(X))}$ of the reductive group $\rho(\pi_1(X))$ in $\GL(m, \C)$. $\overline{\rho(\pi_1(X))}$ is isogeny to a direct product $T\times\Pi_{i=0}^dG_i$, where $T$ is a complex torus $(\C^*)^l$ and $G_i$ are non-commutative almost simple algebraic groups. Since $b_1(X)=0$, $T=0$. Thus we can pick an almost simple factor, say $G_0$, such that the induced representation $\rho_0\colon \pi_1(X)\to G_0$, via the projection to $G_0$, is a Zariski dense representation into a non-commutative almost simple group $G_0$ with infinite image $\rho_0(\pi_1(X))$. By Theorem \ref{Thm:Shafarevich}, we have the Shafarevich map associated to the representation $\rho_0$ \[\Sh^{\rho_0}_X\colon X\dashrightarrow \Sh^{\rho_0}(X).\] Since $\rho_0(\pi_1(X))$ is infinite, $\dim \Sh^{\rho_0}(X)>0$. 

Suppose that $\dim \Sh^{\rho_0}(X)=1$. Assume that there are infinitely many geometric genus $1$ curves on $X$. Since $X$ is of general type, general fibres of $\Sh_X^{\rho_0}$ are curves of geometric genus $g>1$. Hence there exists a geometric genus $1$ curve $C$ such that $\Sh^{\rho_0}_X(C)$ is dense in $\Sh^{\rho_0}(X)$. Since $q(X)=0$, we have $\Sh^{\rho_0}(X)\cong \mathbb{P}^1$. Taking blow-ups of $X$ to resolve the singularities of $C$ and the indeterminancy of $\Sh^{\rho_0}_X$, we get a smooth projective surface $\widehat{X}$ and the following commutative diagram
\begin{equation}\label{diag:shafa1}
\xymatrix{
\widehat{C}\ar[d]\ar@{^{(}->}[r]^-{\widehat{i}}& \widehat{X} \ar[d]^{\eta}\ar[r]^-{\widehat{\Sh}^{\rho_0}_X}&\mathbb{P}^1\ar@{=}[d] \\
C \ar@{^{(}->}[r]^-i& X\ar@{-->}[r]^-{\Sh^{\rho_0}_X}&\mathbb{P}^1,}
\end{equation}
where $\eta$ is the blow-up map, and $\widehat{C}$ is the smooth strict transform of $C$. Also, notice that the general fibre of $\Sh^{\rho_0}_X$ is the same as the general fibre of $\widehat{\Sh}^{\rho_0}_X$, which is denoted by $F$. Let $S\subset \mathbb{P}^1$ be the set of points over which $\widehat{\Sh}^{\rho_0}_X$ is not smooth. Denote $\widehat{X}^o\coloneqq (\widehat{\Sh}^{\rho_0}_X)^{-1}(\mathbb{P}^1-S)$. We have the following exact sequence $$
\xymatrix{
 \pi_1(F) \ar[r]&\pi_1(\widehat{X}^o)\ar[r]\ar@{->>}[d]^-{p}&\pi_1(\mathbb{P}^1-S)\to \{1\}\\
&\pi_1(\widehat{X}), }
$$
Then $\im\{\pi_1(F)\to\pi_1(\widehat{X}^o)\}\lhd\pi_1(\widehat{X}^o)$ is a normal subgroup, which we denote as $N$. Hence the image $p(N)\lhd\pi_1(\widehat{X})$ is a normal subgroup. Then the subgroup generated by $p(N)$ and $\widehat{i}_*(\pi_1(\widehat{C}))$ is \[\langle p(N), \widehat{i}_*(\pi_1(\widehat{C}))\rangle=\{g\cdot h\mid\ g\in p(N), h\in\widehat{i}_*(\pi_1(\widehat{C}))\}\] where $\widehat{i}$ is the inclusion in diagram (\ref{diag:shafa1}). Since $F$ is contracted by $\Sh^{\rho_0}_X$, for the representation $\rho_0\colon\pi_1(\widehat{X})=\pi_1(X)\to G_0$, $\rho_0\circ p(N)$ is finite. Also, since $\langle p(N), \widehat{i}_*(\pi_1(\widehat{C}))\rangle$ is a finite index subgroup of $\pi_1(\widehat{X})$ by \cite[Proposition 1.4]{Camp91}, we have that $\rho_0\circ \widehat{i}_*(\pi_1(\widehat{C}))$ is a finite index infinite abelian subgroup of $\rho_0(\pi_1(\widehat{X}))$ (Here $\widehat{C}$ is a smooth elliptic curve). Hence $G_0$ contains the Zariski dense subgroup $\rho_0(\pi_1(\widehat{X}))$, which is an infinite almost abelian group. This contradicts to Lemma \ref{lem:almost-simple}. Hence in the case that $\dim \Sh^{\rho_0}(X)=1$, $X$ contains at most finitely many geometric genus $1$ curves. When $\dim \Sh^{\rho_0}(X)=2$, we have a Zariski dense representation of $\pi_1(X)$ to an almost simple algebraic group $G_0$ such that the associated Shafarevich map is birational. Again $X$ has only finitely many geometric genus $1$ curves by \cite[Theorem 2]{Zuo96b}.
\end{proof}

\end{document}